\DeclarePairedDelimiter\abs{\lvert}{\rvert}
\DeclarePairedDelimiter\norm{\lVert}{\rVert}
\let\oldabs\abs
\def\abs{\@ifstar{\oldabs}{\oldabs*}}
\let\oldnorm\norm
\def\norm{\@ifstar{\oldnorm}{\oldnorm*}}
\newcommand*{\rom}[1]{\expandafter\@slowromancap\romannumeral #1@}
\newtheorem{crl}{Corollary}[section]
\newtheorem{prop}[crl]{Proposition}
\newtheorem{thm}[crl]{Theorem}
\newtheorem{lem}[crl]{Lemma}
\newtheorem{rmk}[crl]{Remark}
\theoremstyle{definition}
\newtheorem{defn}[crl]{Definition}
\newcommand{\rank}{\operatorname{rank}}
\newcommand{\Op}{\operatorname{Op}}
\newcommand{\Tr}{\operatorname{Tr}}
\newcommand{\hl}{}
\numberwithin{equation}{section}
\title[Wavelet Techniques in Spectral Theory]{Estimates for Schur Multipliers and Double Operator Integrals --- A Wavelet Approach}
\author[E.~McDonald]{Edward McDonald}
\address{School of Mathematics and Statistics\\ University of New South Wales\\ Kensington, NSW 2052\\ Australia}
\email{edward.mcdonald@unsw.edu.au}
\author[T.~T.~Scheckter]{Thomas Tzvi Scheckter}
\address{School of Mathematics and Statistics\\ University of New South Wales\\ Kensington, NSW 2052\\ Australia}
\email{t.scheckter@unsw.edu.au}
\author[F.~A.~Sukochev]{Fedor Sukochev}
\address{School of Mathematics and Statistics\\ University of New South Wales\\ Kensington, NSW 2052\\ Australia}
\email{f.sukochev@unsw.edu.au}
\begin{document}

\begin{abstract}
	We discuss the work of Birman and Solomyak on the singular numbers of integral
	operators from the point of view of modern approximation theory, 
	in particular with the use of wavelet techniques. 
	We are able to provide a simple proof of norm estimates
	for integral operators with kernel in 
	$B^{\nicefrac{1}{p}-\nicefrac{1}{2}}_{p,p}(\mathbb R,L_2(\mathbb R))$.
	This recovers, extends and sheds new light on a theorem of Birman and Solomyak.
	We also use these techniques to provide a simple proof of
	Schur multiplier bounds for double operator integrals, with bounded symbol
	in 
	$B^{\nicefrac{1}{p}-\nicefrac{1}{2}}_{\frac{2p}{2-p},p}
	(\mathbb R,L_\infty(\mathbb R))$,
	which extends Birman and Solomyak's result to symbols without compact
	domain.
\end{abstract}

\maketitle

\section{Introduction}

Through their study of differentiability of functions of Hermitian operators,
Dalecki\u{\i} and Krein
\cite{Daleckii:Krein:1951, Daleckii:Krein:1956}
were
led to initiate the theory of double operator integrals, a powerful tool built
from spectral theory, which has now found itself deeply embedded in
pseudodifferential operator theory, harmonic analysis, and mathematical physics.
Although not the first works studying these methods, the series of papers
\cite{Birman:Solomyak:1966, Birman:Solomyak:1967, Birman:Solomyak:1973}
by Birman and Solomyak undoubtedly form the foundations for the further study of
double operator integrals (from here referred to as DOIs).
These papers were revolutionary at the time, and are still fundamental, but
suffer from technically challenging proofs, which are difficult to further build
upon.
We wish to rectify this, combining advances in nonlinear approximation and
wavelet analysis in order to simplify the proofs, and strengthen the
beautiful results which lie at centre of Birman and Solomyak's theory.

Here we consider a question which forms the basis for further study of DOIs ---
what norm bounds does a DOI admit for a given symbol?
Following the footsteps of Birman and Solomyak, we are able to provide a
simple proof technique, building upon the machinery afforded to us
through wavelet analysis and non-linear 
approximation theory, as developed in the '80s and '90s.

Unfortunately, so far as the authors are aware, the use of these 
techniques to study Schur multipliers has been very limited.
However, there are some clear examples, such as the early work of Peng
\cite{Peng:1993}, who used wavelet bases to find Schatten--von Neumann
class norm estimates
for certain integral operators.
Only recently, these techniques were adapted in order to prove new 
Lipschitz estimates in Schatten--von Neumann ideals \cite{MS:2020}

We mark three contributions to the theory of DOIs.
The first contribution is to extend Schatten--von Neumann 
class and Schur multiplier
estimates to DOIs having symbol with non-compact support.
Secondly, we loosen the smoothness restrictions on the symbol, and finally, we
provide clear and succinct proofs of these results, building a new methodology.

It is only through powerful wavelet analysis and approximation techniques that
we are able to both simplify and extend the results of Birman and
Solomyak, and we hope that in turn, this new approach will allow for working
mathematicians and physicists, not already familiar with the technicalities
of the theory, to develop new applications for DOIs.

Finally, the authors would like to thank Professor Rozenblyum for his many
helpful comments.

\subsection{Overview of the results}
Recall that if $T\in\mathcal K(\mathcal H)$ is a compact linear operator on a
Hilbert space
$\mathcal{H}$, then the singular value sequence $\mu(T)$ is defined by
\[
    \mu(T) = (\mu(n,T))_{n=0}^\infty,\qquad
	\mu(n,T) = \inf\{\|T-R\|_\infty:\rank(R)\leq n\}
\]
where $\norm{\cdot}_\infty$ is the operator norm.
For $0 < p < \infty$, an operator $T$ is said to belong to the
Schatten--von Neumann $\mathcal{L}_p(\mathcal{H})$-class if
$\|T\|_p = \left(\sum_{n=0}^\infty \mu(n,T)^p\right)^{\frac{1}{p}} < \infty$.
For $1\leq p < \infty$,
$\mathcal{L}_p$ is a Banach ideal of the algebra of all bounded linear 
endomorphisms of $\mathcal{H}$ and is a quasi-Banach ideal
for $0 < p < 1$.
Similarly, $\mathcal{L}_{p,q}(\mathcal{H})$ is defined as the space
of operators $T$ with singular value sequence belonging to the Lorentz sequence
space $\ell_{p,q}$ (see, for example, \cite{LT2} for the definition).

For a square-integrable function $k \in L_2(\mathbb{R}^2)$, denote by $\Op(k)$
the corresponding integral operator on $L_2(\mathbb{R})$.
That is,
\[
	\Op(k)(f)(y)=\int_{\mathbb R}k(x,y)f(x)dx,
	\quad f \in L_2(\mathbb{R}),\, y \in \mathbb{R}.
\]
Here, and throughout, we understand a function in some vector-valued Besov
class, $k\in B^s_{p,q}(\mathbb R;L_r(\mathbb R))$, as being a function over
$\mathbb R^2$.
Consider a function $k:\mathbb R^2\to\mathbb R$.
Clearly, for any $x\in\mathbb R$, $k(x,\cdot)$ is a function over $\mathbb R$,
and in this sense we may identify any function over $\mathbb R^2$ with a
function-valued function.
Conversely, let $k\in B^s_{p,q}(\mathbb R;L_r(\mathbb R))$.
Then there exists a function $k_s:\mathbb R\to\mathbb R\in L^r(\mathbb R)$, for
each $s\in\mathbb R$, such that $k(s)=g_s$.
Then, for each $t\in\mathbb R$, we may treat $k$ as a function over
$\mathbb R^2$ by identifying $k$ with $k(s,t)=k_s(t)$.
We will make us of this identification throughout.

One of the primary goals of Birman and Solomyak's 1977 survey
\cite{Birman:Solomyak:1977} is to give sufficient conditions
on $k$ such that $\Op(k) \in \mathcal{L}_p(L_2(\mathbb{R}))$.
This is achieved by highly technical results of piecewise polynomial
approximation.
The essential idea is to find conditions on $k$ such that there exists
a sequence of functions of the form
$(t,s)\mapsto \sum_{j=1}^n \xi_j(t)\eta_j(s)$
which approximate $k$ sufficiently quickly.
Numerous results of this nature were obtained.
Our aim here is to shed new light on these ideas through modern techniques of
non-linear approximation theory and wavelet analysis.
Notation and definitions concerning Besov spaces will be given in the next
section.

The following is a special case of \cite[Proposition 2.1]{Birman:Solomyak:1977}:
\begin{thm}\label{BS_singular_value_theorem_special_case}
	Let $I$ be an open bounded interval in $\mathbb{R}$,
	and let $0 < p \leq 2.$ Let $k\in L_2(\mathbb{R}^2)$
	belong to the vector-valued Besov class $B^{\alpha}_{q,q}(I,L_2(I))$, where
    \begin{equation*}
        \frac{1}{p}= \alpha+\frac{1}{2},\quad q \geq \max
		\left\{2,\nicefrac{1}{\alpha}\right\}.
    \end{equation*}
    Then $\Op(k) \in \mathcal{L}_{p,\infty}(L_2(\mathbb{R})).$
\end{thm}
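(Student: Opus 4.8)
The plan is to expand the kernel in its smooth variable against a compactly supported orthonormal wavelet basis and to read the singular value asymptotics of $\Op(k)$ directly off the resulting coefficients. Fix a sufficiently regular Daubechies basis $\{\psi_{j,r}\}$ of $L_2(\mathbb{R})$ (the position index is $r$, to avoid clashing with the kernel $k$), and expand $k$ in its first argument: $k(t,s)=\sum_{j,r}c_{j,r}(s)\,\psi_{j,r}(t)$, with $c_{j,r}\in L_2(I)$ and $b_{j,r}:=\norm{c_{j,r}}_{L_2(I)}$. The first input is the wavelet characterization of the vector-valued Besov class: under the stated hypotheses this gives the norm equivalence $\norm{k}_{B^\alpha_{q,q}(I,L_2(I))}\asymp\big(\sum_j 2^{j(\alpha q+q/2-1)}\sum_r b_{j,r}^q\big)^{1/q}$. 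I would treat this as a black box to be set up in the next section; the only feature used explicitly below is $q\ge 2$.

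Next I would organize the expansion by scale. For each $j$ set $P_j=\Op\big(\sum_r c_{j,r}\otimes\psi_{j,r}\big)$. Two facts drive everything. Since $I$ is bounded, only $O(2^j)$ of the $\psi_{j,r}$ meet $I$, so $\rank P_j\lesssim 2^j$; and since $\{\psi_{j,r}\}_r$ is orthonormal in the $t$-variable, the Hilbert--Schmidt norm is exactly $\norm{P_j}_2=\beta_j:=(\sum_r b_{j,r}^2)^{1/2}$. Using $q\ge 2$ together with the bound of at most $C2^j$ nonzero terms per scale, Hölder's inequality gives $\beta_j\le (C2^j)^{1/2-1/q}(\sum_r b_{j,r}^q)^{1/q}$; feeding in the characterization and the relation $1/p=\alpha+\tfrac12$, the powers of $2^j$ collapse and one obtains $\beta_j\lesssim 2^{-j\alpha}\norm{k}_{B^\alpha_{q,q}(I,L_2(I))}$.

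The final step combines the blocks. Split $\Op(k)=\sum_{j\le J}P_j+\sum_{j>J}P_j$ at a scale $J$. The head has rank $R\lesssim 2^J$, and the tail satisfies $\norm{\sum_{j>J}P_j}_2\le\sum_{j>J}\beta_j\lesssim 2^{-J\alpha}$, a convergent geometric series because $\alpha>0$ (that is, $p<2$). Since $\mu(R,\text{head})=0$ and every Hilbert--Schmidt $A$ obeys $\mu(m,A)\le\norm{A}_2/\sqrt{m+1}$, the subadditivity $\mu(R+m,\Op(k))\le\mu(R,\text{head})+\mu(m,\text{tail})$ with the choice $m=R\sim 2^J$ yields $\mu(2R,\Op(k))\lesssim 2^{-J\alpha}\cdot 2^{-J/2}=2^{-J(\alpha+1/2)}=(2^J)^{-1/p}$. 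Letting $J$ range over the integers gives $\mu(n,\Op(k))\lesssim n^{-1/p}$ along a dyadic sequence, and monotonicity of $n\mapsto\mu(n,\Op(k))$ fills the gaps, so $\Op(k)\in\mathcal{L}_{p,\infty}(L_2(\mathbb{R}))$.

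I expect the decisive point to be precisely this combination, namely extracting the \emph{sharp} exponent $n^{-1/p}$ rather than the weaker $n^{-\alpha}$ that a crude truncation of the tail in operator norm would give. The mechanism is that after spending $\sim 2^J$ singular values to annihilate the head exactly, one must still use the remaining $\sim 2^J$ slots against the tail through the elementary factor $\mu(m,A)\le\norm{A}_2/\sqrt{m+1}$; it is the product $2^{-J\alpha}\cdot 2^{-J/2}$ of the tail's Hilbert--Schmidt size with this $1/\sqrt{m}$ gain that produces the correct power $1/p=\alpha+\tfrac12$. The two remaining technical burdens are the vector-valued wavelet characterization of $B^\alpha_{q,q}(I,L_2(I))$ on the bounded interval $I$ (including the finitely many boundary-adapted wavelets present at each scale), and the bookkeeping confirming that $q\ge 2$ is exactly what the Hölder step needs in order to pass from the per-scale $\ell_q$ control to the Hilbert--Schmidt bound $\beta_j\lesssim 2^{-j\alpha}$.
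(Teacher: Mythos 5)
Your argument is correct in outline and, granting the two technical inputs you explicitly defer (the vector-valued wavelet characterisation of $B^\alpha_{q,q}(I,L_2(I))$ on a bounded interval, and the bookkeeping at coarse scales), it proves the statement. Be aware, though, that the paper offers no proof of this theorem: it is quoted from Birman--Solomyak, and the paper instead proves the strengthening in Theorem~\ref{main_lpq_result}, namely that $k\in \dot{B}^{\nicefrac{1}{p}-\nicefrac{1}{2}}_{p,p}(\mathbb R,L_2(\mathbb R))\cap L_2(\mathbb R^2)$ yields $\Op(k)\in\mathcal L_p$ rather than $\mathcal L_{p,\infty}$. The two routes share the same engine --- rank reduction for wavelet blocks combined with the elementary bound $\mu(m,A)\le (m+1)^{-\nicefrac{1}{2}}\norm{A}_2$, which is exactly Lemma~\ref{lem:mu:estimate} --- but differ in the approximation scheme. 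You truncate \emph{linearly}, scale by scale: each block $P_j$ is annihilated exactly using $\sim 2^j$ ranks (possible only because $I$ is bounded, which is where compact support enters), and the tail is controlled in Hilbert--Schmidt norm via H\"older with $q\ge 2$. This naturally produces the weak-type conclusion and accommodates the larger space $B^\alpha_{q,q}$, $q\geq 2$. The paper instead uses \emph{nonlinear} best $n$-term approximation (Theorem~\ref{thm:nonlinear}), selecting the $n$ largest coefficients globally across all scales, and transports approximation numbers through the isometry $\Op:L_2(\mathbb R^2)\to\mathcal L_2$ and the Hardy inequality (Proposition~\ref{prop:lpq}); it is this global selection that upgrades $\ell_{p,\infty}$ to $\ell_p$ and removes the dependence on a bounded interval. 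Two small corrections to your write-up: (i) the expansion must be the inhomogeneous one, with a scaling-function layer at level $0$ and detail scales $j\ge 0$ only --- if you sum over all $j\in\mathbb Z$, the head $\sum_{j\le J}P_j$ picks up $O(1)$ rank from each of infinitely many coarse scales and the bound $\rank(\mathrm{head})\lesssim 2^J$ fails; (ii) the endpoint $p=2$, i.e.\ $\alpha=0$, falls outside your geometric-series step, but is trivial since $\Op(k)\in\mathcal L_2\subseteq\mathcal L_{2,\infty}$ for $k\in L_2(\mathbb R^2)$.
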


\begin{rmk}
    The original statement of Birman and Solomyak applied to integral operators
	on $L_2(I^d)$ for $d\geq 1$.
	Here we restrict to the one-dimensional case for simplicity.
    Moreover, the space $B^{\alpha}_{p,p}(I,L_2(I))$ was instead given as a
	Sobolev--Slobodetskii space, which is smaller than the Besov space when
	$\alpha$ is an integer.
\end{rmk}

Throughout the paper, let us write that $a\lesssim b$ if there exists a
constant $C$ such that $a\le C b$, and $a\approx b$ if there exists a constant
$C$ such that $C^{-1}b\le a\le Cb$.
This constant may change from line to line.
If, for some given variables $p,q$, the constant is dependent upon those
variables, we may write $\lesssim_{p,q}$, or $\approx_{p,q}$.

In this paper we give a new perspective, and strengthen,
Theorem~\ref{BS_singular_value_theorem_special_case}, by proving that if
$k \in L_2(\mathbb{R}^2)\cap 
\dot{B}^{\nicefrac{1}{p}-\nicefrac{1}{2}}_{p,p}
(\mathbb{R},L_2(\mathbb{R}))$ then $\Op(k)$
belongs to $\mathcal{L}_p$.
The following is our first key result, and is proved in Section \ref{singular_values_section}.

\begin{thm}\label{intro_main_schatten_result}
    Let $0 < p \leq 2$, and let $k\in L_2(\mathbb{R}^2)$ belong to the 
	vector-valued homogeneous Besov class
	$\dot{B}^{\nicefrac{1}{p}-\nicefrac{1}{2}}_{p,p}
	(\mathbb{R},L_2(\mathbb{R}))$.
	Then $\Op(k)$ belongs to $\mathcal{L}_p$, and
    \begin{equation*}
        \|\Op(k)\|_p \lesssim 
		\|k\|_2+|k|_{\dot{B}^{\nicefrac{1}{p}-
		\nicefrac{1}{2}}_{p,p}(\mathbb{R},L_2(\mathbb{R}))}.
    \end{equation*}
\end{thm}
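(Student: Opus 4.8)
The plan is to diagonalise the problem through a wavelet basis in the first variable. Fix a sufficiently regular orthonormal wavelet system $\{\psi_{j,n}\}_{j,n\in\mathbb Z}$ of $L_2(\mathbb R)$, with $\psi_{j,n}(x)=2^{j/2}\psi(2^jx-n)$, and expand the kernel in this basis while retaining the second variable as an $L_2(\mathbb R)$-valued coefficient,
\[
	k(x,y)=\sum_{j,n}c_{j,n}(y)\,\psi_{j,n}(x),\qquad c_{j,n}=\int_{\mathbb R}k(x,\cdot)\,\overline{\psi_{j,n}(x)}\,dx\in L_2(\mathbb R).
\]
The point of the smoothness index $s=\nicefrac1p-\nicefrac12$ is that in the wavelet characterisation of $\dot B^{s}_{p,p}(\mathbb R,L_2(\mathbb R))$ the dilation weight $2^{j(s+\nicefrac12-\nicefrac1p)}$ equals $1$ for every $j$, so the homogeneous seminorm collapses to the clean expression
\[
	|k|_{\dot B^{\nicefrac1p-\nicefrac12}_{p,p}(\mathbb R,L_2(\mathbb R))}\approx\Bigl(\sum_{j,n}\|c_{j,n}\|_{L_2(\mathbb R)}^{p}\Bigr)^{1/p}.
\]
Since $p\le 2$, the $\ell_p$ quasi-norm of $(\|c_{j,n}\|_{L_2})_{j,n}$ dominates its $\ell_2$ norm, which by Parseval equals $\|k\|_2$; hence the term $\|k\|_2$ in the statement is automatically controlled, and all the work lies in estimating the seminorm. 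I would record this characterisation as the first step.

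Next I would read the operator off the expansion. Because $\Op(k)f(y)=\int k(x,y)f(x)\,dx=\sum_{j,n}\langle f,\psi_{j,n}\rangle\,c_{j,n}(y)$, the operator is a sum of rank-one operators $\Op(k)=\sum_{j,n}c_{j,n}\otimes\psi_{j,n}$, where $u\otimes v$ denotes $f\mapsto\langle f,v\rangle u$. Grouping by scale gives $\Op(k)=\sum_j T_j$ with $T_j=\sum_n c_{j,n}\otimes\psi_{j,n}$. Two structural facts drive the estimate. First, for a \emph{single} scale the input vectors $\{\psi_{j,n}\}_n$ are orthonormal, so in that basis $T_j^*T_j$ is a positive matrix with diagonal entries $\|c_{j,n}\|_{L_2}^2$. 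Secondly, \emph{across} scales the input spaces $W_j=\overline{\operatorname{span}}\{\psi_{j,n}\}_n$ are mutually orthogonal, so writing $T_j=C_jP_j$ with $P_j$ the orthogonal projection onto $W_j$ yields the crucial cancellation of cross terms,
\[
	\Op(k)\,\Op(k)^*=\sum_{j,j'}C_jP_jP_{j'}C_{j'}^*=\sum_j C_jP_jC_j^*=\sum_j T_jT_j^*,
\]
an honest sum of \emph{positive} operators.

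The estimate then follows from two trace inequalities, both available precisely because $p\le 2$. At a single scale, $\|T_j\|_p^p=\Tr\bigl((T_j^*T_j)^{p/2}\bigr)$, and since the eigenvalues of the Hermitian positive matrix $T_j^*T_j$ majorise its diagonal (Schur--Horn) while $x\mapsto x^{p/2}$ is concave for $\nicefrac p2\le 1$, we obtain $\Tr\bigl((T_j^*T_j)^{p/2}\bigr)\le\sum_n\|c_{j,n}\|_{L_2}^{p}$. For the assembly across scales, write $\|\Op(k)\|_p^p=\|\Op(k)\Op(k)^*\|_{p/2}^{p/2}$ and use the Rotfel'd-type subadditivity $\Tr\bigl((A+B)^{r}\bigr)\le\Tr(A^{r})+\Tr(B^{r})$ for $A,B\ge 0$ and $r=\nicefrac p2\le 1$, giving $\|\sum_j T_jT_j^*\|_{p/2}^{p/2}\le\sum_j\|T_jT_j^*\|_{p/2}^{p/2}=\sum_j\|T_j\|_p^p$. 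Chaining the two bounds yields $\|\Op(k)\|_p^p\le\sum_{j,n}\|c_{j,n}\|_{L_2}^p\approx|k|_{\dot B}^p$, which is the claim; the same two inequalities simultaneously show that $\sum_j T_jT_j^*$ is Cauchy, hence convergent, in $\mathcal L_{p/2}$, so that $\Op(k)\in\mathcal L_p$.

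The genuinely delicate point is not this operator-theoretic assembly, which is short, but securing the vector-valued wavelet characterisation of the homogeneous Besov seminorm together with the $L_2$-convergence of the homogeneous expansion over all scales $j\in\mathbb Z$. The coarse scales, $j\to-\infty$, are exactly where homogeneous Besov spaces are subtle, and the hypothesis $k\in L_2(\mathbb R^2)$ is what fixes a genuine representative and guarantees that $\Op(k)$ is a bona fide operator rather than an expansion modulo polynomials. I would therefore treat that characterisation as the main analytic input, established with the Besov-space machinery of the preceding section, and regard everything downstream as a clean consequence of Schur--Horn majorisation and the subadditivity of $\Tr(\cdot^{r})$ for $r\le 1$.
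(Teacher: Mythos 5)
Your proof is correct, but it reaches the Schatten bound by a genuinely different mechanism than the paper. Both arguments share the same analytic input --- the wavelet characterisation of $\dot B^{\nicefrac1p-\nicefrac12}_{p,p}(\mathbb R,L_2(\mathbb R))\cap L_2(\mathbb R^2)$ as the kernels whose coefficient sequence $(\|c_{j,n}\|_2)_{j,n}$ lies in $\ell_p$, which is Theorem~\ref{thm:nonlinear} and which you rightly identify as the delicate point --- but from there the routes diverge. The paper never touches the operator directly: it observes that an $n$-term wavelet sum is a kernel of rank at most $n$, identifies $\mathcal L_{p,q}$ with the approximation space for finite-rank approximation in the Hilbert--Schmidt norm via Lemma~\ref{lem:mu:estimate} ($\mu(2n,T)\le n^{-\nicefrac12}e(n,T)$) together with the discrete Hardy inequality (Proposition~\ref{prop:lpq}), and then compares the two approximation schemes. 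You instead exploit the orthogonality of the wavelet system twice: across scales, so that $\Op(k)\Op(k)^*=\sum_j T_jT_j^*$ with no cross terms, and within a scale, so that Schur--Horn majorisation plus concavity of $t\mapsto t^{\nicefrac p2}$ gives $\|T_j\|_p^p\le\sum_n\|c_{j,n}\|_2^p$; the assembly is then the subadditivity of $\Tr(A^{\nicefrac p2})$ on positive operators. Your version is shorter and more self-contained at the operator level (no Lorentz sequence spaces, no Hardy inequality), and it handles convergence of $\sum_j T_j$ in $\mathcal L_p$ cleanly. What the paper's formalism buys in exchange is robustness and generality: it only uses that $T_n\otimes L_2(\mathbb R)\subseteq\Sigma_n$, so it applies verbatim to non-orthogonal dictionaries of elementary tensors (piecewise polynomials, as in Birman--Solomyak) and yields the whole Lorentz scale $\mathcal L_{p,q}$ for free, whereas your argument is tied to orthogonality and, as written, produces only the diagonal case $q=p$.
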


Another important contribution of Birman and Solomyak concerns estimates for
Schur multipliers. 
For $0 < p \leq 2$, the $\mathfrak M_p$-norm of a bounded measurable function
$k$ on $I^2$, where $I\subseteq \mathbb{R}$, is given by
\begin{equation*}
    \|k\|_{\mathfrak M_p(I^2)} =
	\sup_{\|\Op(\phi)\|_{\mathcal{L}_p(L_2(I))\leq 1}} 
	\|\Op(\phi k)\|_{\mathcal{L}_p(L_2(I))}. 
\end{equation*}
We review material concerning Schur multipliers in Section~\ref{schur_section} 
below.

For $0 < p < 2$, we denote $p^\flat = \frac{2p}{2-p}$.

\begin{thm}[{\cite[Theorem~9.2]{Birman:Solomyak:1977},%
	See also \cite[Theorems~3,9]{Birman:Solomyak:1966}}]
	For any index $0<p\le 1$, let $\alpha>\nicefrac{1}{p^\flat}$.
	Over any bounded open interval $I$, and any function $k$ on $I\times I$,
	we have the estimate
	\[
		\norm{k}_{\mathfrak M_p(I^2)}
		\lesssim_I
		\norm{k}_{B^\alpha_{2,\infty}(I,L_\infty(I))}.
	\]
\end{thm}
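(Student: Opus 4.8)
The plan is to build on the wavelet/nonlinear-approximation machinery that produces the Schatten estimate in Theorem~\ref{intro_main_schatten_result} and transport it to the Schur-multiplier setting via the standard duality characterization of the $\mathfrak M_p$ norm. The key structural fact about Schur multipliers for $0<p\le 1$ is that $k$ acts boundedly on $\mathcal L_p$ by multiplication precisely when $k$ admits a representation $k(t,s)=\sum_j a_j(t)b_j(s)$ with control on an appropriate $\ell_\infty$-type combination of the factors; more usefully for us, $\|k\|_{\mathfrak M_p(I^2)}$ can be estimated by writing $k$ as a sum of rank-one ``atoms'' localized in frequency and bounding each atom's contribution. So my first step is to expand $k$ in a tensor wavelet basis adapted to $B^\alpha_{2,\infty}(I,L_\infty(I))$: writing $\psi_{j,n}$ for the (compactly supported, say Daubechies) wavelets on the first variable $t$, I decompose $k(t,s)=\sum_{j,n}\psi_{j,n}(t)\,c_{j,n}(s)$ where $c_{j,n}(s)=\langle k(\cdot,s),\psi_{j,n}\rangle$ is an $L_\infty(I)$-valued coefficient, and the Besov norm controls the quantity $\sup_j 2^{j\alpha}\bigl(\sum_n \|c_{j,n}\|_{L_\infty(I)}^2\bigr)^{1/2}$.

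Next I would estimate the $\mathfrak M_p$-norm of a single dyadic block $k_j(t,s)=\sum_n \psi_{j,n}(t)c_{j,n}(s)$ and then sum over $j$ using the quasi-triangle inequality for $\mathfrak M_p$ (which, being a $\|\cdot\|$-ideal norm of multipliers, satisfies $\|\sum_j k_j\|_{\mathfrak M_p}^p\lesssim \sum_j\|k_j\|_{\mathfrak M_p}^p$ in the $p\le 1$ regime). The contribution of a single block should be estimable by exploiting the disjoint (or finitely-overlapping) supports of the $\psi_{j,n}$ in $t$ at fixed scale $j$: at scale $j$ there are $\approx 2^j$ wavelets over the bounded interval $I$, their $t$-supports are almost disjoint, and a function supported on such a near-disjoint family behaves like a block-diagonal multiplier. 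The crucial computation is to show $\|k_j\|_{\mathfrak M_p(I^2)}\lesssim 2^{-j/p^\flat}\bigl(\sum_n\|c_{j,n}\|_{L_\infty}^2\bigr)^{1/2}\cdot 2^{j/2}$, so that the exponent of $2^j$ after summing in $j$ is governed by $\alpha-\nicefrac{1}{p^\flat}$; the hypothesis $\alpha>\nicefrac{1}{p^\flat}$ is exactly what makes $\sum_j 2^{-jp(\alpha-1/p^\flat)}$ converge. Here the elementary bound I expect to use is that for a single-scale block the $\mathfrak M_p$ norm reduces to an $L_\infty$-type supremum in $s$ of a finite-rank $t$-multiplier whose rank is $\approx 2^j$, and the $\mathcal L_p$-versus-$\mathcal L_2$ comparison on a rank-$N$ space costs a factor $N^{1/p-1/2}=N^{1/p^\flat}$.

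The main obstacle I anticipate is making the single-block estimate rigorous: one must convert the wavelet localization in the $t$-variable into a genuine control of the Schur-multiplier norm, and the naive bound ``multiplier norm $\le$ sup of symbol'' is false for general $p$. The honest route is to observe that at fixed scale the block $k_j$ is a multiplier supported, after reindexing, on a near-block-diagonal pattern dictated by the overlap of wavelet supports, reduce to estimating each near-diagonal block, and on each block use the trivial embedding $\mathcal L_2\hookrightarrow\mathcal L_p$ together with the finite rank $\approx 2^j$ to pay the $2^{j/p^\flat}$ price while gaining $2^{j/2}$ from the $L_2$-normalization of the wavelets. A secondary technical point is the restriction to $0<p\le 1$: the quasi-norm triangle inequality with $p$-th powers is available precisely because $\mathfrak M_p$ is a $p$-normed (quasi-Banach) ideal for these indices, which is what allows the block sums to be recombined; for $p>1$ one would need a different, genuinely interpolatory argument. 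Finally I would track the dependence on $I$, which enters through the number $\approx 2^j|I|$ of wavelets at each scale and is absorbed into the constant $\lesssim_I$.
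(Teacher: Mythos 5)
Your overall strategy --- expand $k$ in a compactly supported wavelet basis in the $t$-variable, estimate each dyadic block $k_j(t,s)=\sum_n\varphi_{j,n}(t)c_{j,n}(s)$ as a Schur multiplier by exploiting the disjointness of the wavelet supports at a fixed scale, and recombine the scales with the $p$-triangle inequality --- is essentially the route this paper takes, not for the quoted Birman--Solomyak statement itself (which is only cited here, not reproved) but for its strengthening, Theorem~\ref{thm:schur}; the quoted statement then follows by embedding $B^\alpha_{2,\infty}(I,L_\infty(I))$, $\alpha>\nicefrac{1}{p^\flat}$, into $\dot{B}^{\nicefrac{1}{p^\flat}}_{p^\flat,p}\cap L_\infty$ on a bounded interval. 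The way to make your ``near-block-diagonal'' step honest is the paper's Theorem~\ref{thm:decomp}: by \eqref{rank_one_suffices} it suffices to test against rank-one operators $\xi\otimes\eta$; one then splits $\xi$ over the disjoint wavelet supports and applies the $p$-triangle inequality followed by H\"older with exponents $\nicefrac{p^\flat}{p}$ and $\nicefrac{2}{p}$. No finite-rank $\mathcal L_2\hookrightarrow\mathcal L_p$ comparison of operators is actually needed.

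However, your displayed single-block estimate is incorrect, and incorrect in the direction of claiming too much. The correct bound (Corollary~\ref{crl:schur:est} followed by H\"older over the $\approx |I|2^j$ coefficients present at scale $j\ge 0$) is
\[
	\norm{k_j}_{\mathfrak M_p(I^2)}\lesssim
	2^{\nicefrac{j}{2}}\norm{(\norm{c_{j,n}}_\infty)_n}_{\ell_{p^\flat}}
	\lesssim_I
	2^{\nicefrac{j}{2}}\,2^{j(\nicefrac{1}{p^\flat}-\nicefrac{1}{2})}
	\Bigl(\sum_n\norm{c_{j,n}}_\infty^2\Bigr)^{\nicefrac{1}{2}}
	=
	2^{\nicefrac{j}{p^\flat}}
	\Bigl(\sum_n\norm{c_{j,n}}_\infty^2\Bigr)^{\nicefrac{1}{2}},
\]
whereas you assert a prefactor $2^{-\nicefrac{j}{p^\flat}}\cdot 2^{\nicefrac{j}{2}}=2^{j(\nicefrac{1}{2}-\nicefrac{1}{p^\flat})}$. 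This contradicts your own heuristic that the rank-$N$ comparison \emph{costs} a factor $N^{\nicefrac{1}{p^\flat}}$, and if it were true it would prove the theorem for every $\alpha>\nicefrac{1}{2}-\nicefrac{1}{p^\flat}$ (for $p=1$, every $\alpha>0$), which is false. With the corrected exponent the sum over scales becomes $\sum_{j\ge 0}2^{jp(\nicefrac{1}{p^\flat}-\alpha)}$ --- which is the final sum you wrote down, converging precisely when $\alpha>\nicefrac{1}{p^\flat}$ --- so the slip is repairable, but as written the key display does not support the conclusion you draw from it. Two smaller points you should address: the coarse-scale (scaling-function) contribution on the bounded interval, and the fact that $\sum_j k_j$ need not converge to $k$; the paper circumvents the latter by writing $k(t,s)=c(t)+\sum_j(k_j(t,s)-k_j(t,0))$ with $c\in L_\infty$ controlled by $\norm{k}_\infty$ plus the Besov seminorm.
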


\begin{rmk}
	\begin{enumerate}
		\item
			The dependence of the constant on the choice of interval $I$
			prevents the proof technique from being adapted to functions
			without compact support, as limiting arguments cannot be applied.
			Our proof technique then provides a substantial improvement by
			removing this limitation, such that we may consider functions
			without compact support.
		\item
	As before, we state here only the $1$-dimensional variant of
	\cite[Theorem~9.2]{Birman:Solomyak:1977}.
	Note that in Birman and Solomyak's paper, $B^\alpha_{2,\infty}(I)$ is called
	a Nikolskii--Besov space, and is denoted by $H^\alpha_2(I)$, and which
	should not be confused for the Bessel potential space $B^{\alpha}_{2,2}(I)$
	which is typically also denoted by $H^\alpha_2(I)$.
		\item
			Our approach may be further extended to study Besov spaces over
			$\mathbb R^d$, for any dimension $d$, however here we only
			cover the $1$-dimensional variant for simplicity of exposition,
			and to avoid unnecessary technicalities.
	\end{enumerate}
\end{rmk}

The following is our second key result, and 
is proved below in Section \ref{schur_section}
(see Theorem~\ref{thm:schur}).

\begin{thm}
	For any index $p\in(0,2)$, if
	$k\in \dot{B}^{\nicefrac{1}{p^\flat}}_{p^\flat,p}
	(\mathbb R,L_\infty(\mathbb R))
	\cap L_\infty(\mathbb R^2)$, then $k$ is an $\mathcal L_p$-Schur multiplier,
	with the quasinorm estimate
	\[
		\norm{k}_{\mathfrak M_p(\mathbb{R}^2)}\lesssim_p
			|k|_{\dot{B}^{\nicefrac{1}{p^\flat}}_{%
			p^\flat,p}(\mathbb R,L_\infty(\mathbb R))}
			+
			\norm{k}_{\infty}
		.
	\]
\end{thm}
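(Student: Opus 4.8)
The plan is to reduce the Schur multiplier estimate to the Schatten-class estimate of Theorem~\ref{intro_main_schatten_result} by exploiting the wavelet structure of the symbol. The key observation is that a Schur multiplier bound $\norm{k}_{\mathfrak M_p(\mathbb R^2)}$ controls, up to constants, how much the operation $\Op(\phi)\mapsto\Op(\phi k)$ can inflate the $\mathcal L_p$-quasinorm. First I would expand $k$ in a tensor-product wavelet basis adapted to the vector-valued Besov space $\dot B^{1/p^\flat}_{p^\flat,p}(\mathbb R,L_\infty(\mathbb R))$, writing $k=\sum_{\lambda}c_\lambda\,\psi_\lambda$ where the $\lambda$ range over dyadic scales and positions. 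The membership $k\in\dot B^{1/p^\flat}_{p^\flat,p}$ then translates into a weighted $\ell_p$ summability condition on the coefficient sequence, with weights involving the dyadic scale raised to the power $1/p^\flat=\frac{2-p}{2p}=\frac 1p-\frac 12$ (after the $p^\flat$-normalisation is absorbed).

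Next I would split each dyadic level and estimate the $\mathfrak M_p$-quasinorm contribution of a single wavelet block. The decisive structural fact is that Schur multiplication by a rank-one-type wavelet atom can be realised as a composition of pointwise multiplications with controlled $\mathcal L_p$-mapping norms; since $\mathcal L_p$ for $p<1$ is only a quasi-Banach ideal, I would use the $p$-triangle inequality $\norm{\sum_j T_j}_p^p\le\sum_j\norm{T_j}_p^p$ to assemble the levels. The reason the $L_\infty$-valued Besov space appears here, rather than the $L_2$-valued space of the Schatten result, is that Schur multiplication requires control of $k(x,\cdot)$ uniformly rather than in mean square; the extra integrability is exactly what upgrades the smoothness index from $1/p-1/2$ to $1/p^\flat$ and the integrability index from $p$ to $p^\flat$. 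The hypothesis $k\in L_\infty(\mathbb R^2)$ provides the low-frequency control that the homogeneous seminorm alone cannot see, matching the $\norm{k}_\infty$ term in the conclusion.

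The main obstacle I anticipate is passing from the compactly supported, interval-based estimate available in the classical Birman--Solomyak framework to the full line without a geometry-dependent constant. Concretely, the wavelet atoms at a fixed scale are translates of one another, so their individual $\mathfrak M_p$-contributions are uniformly bounded independently of position; the translation invariance of the homogeneous Besov seminorm then lets me sum over all positions at each scale without the interval-dependent blow-up present in the cited theorem. I would verify that the quasinorm estimate is genuinely independent of any truncation, so that a density or monotone-limit argument recovers the non-compactly-supported case. The summation over scales is where the precise exponent $1/p^\flat$ must be checked against the $\mathcal L_p$-quasinorm scaling of a single-scale block; balancing these two rates is the crux, and getting the correct power of the dyadic parameter is what forces the value $p^\flat=\frac{2p}{2-p}$ rather than any nearby exponent. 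Finally I would combine the homogeneous seminorm estimate with the $\norm{k}_\infty$ contribution via one last application of the $p$-triangle inequality to obtain the stated bound.
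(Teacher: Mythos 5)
Your overall architecture (a wavelet expansion of $k$ in the first variable with $L_\infty(\mathbb R)$-valued coefficients, a single-atom multiplier bound, assembly over dyadic scales via the $p$-triangle inequality, and a separate low-frequency term controlled by $\norm{k}_\infty$) matches the paper's, but there is a genuine gap at exactly the point you call the crux: the summation over positions within a fixed scale. A single atom $\varphi_{j,l}\otimes k_{j,l}$ does satisfy $\norm{\varphi_{j,l}\otimes k_{j,l}}_{\mathfrak M_p}\lesssim 2^{\nicefrac{j}{2}}\norm{k_{j,l}}_\infty$ (it acts as $T\mapsto M_{k_{j,l}}TM_{\varphi_{j,l}}$), and these bounds are uniform in $l$; but combining them only with the $p$-triangle inequality yields
\[
\norm{k_j}_{\mathfrak M_p}^p\le\sum_{l\in\mathbb Z}\norm{\varphi_{j,l}\otimes k_{j,l}}_{\mathfrak M_p}^p\lesssim 2^{\nicefrac{jp}{2}}\sum_{l\in\mathbb Z}\norm{k_{j,l}}_\infty^p,
\]
an $\ell_p$-sum over positions, whereas the Besov hypothesis only gives $\ell_{p^\flat}$-summability with $p^\flat=\frac{2p}{2-p}>p$. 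Translation invariance and uniformity do not close this gap. The missing idea is the paper's Theorem~\ref{thm:decomp}: since the atoms at a fixed scale are (essentially) disjointly supported in $t$, one computes $\norm{k_j}_{\mathfrak M_p}$ via the rank-one reduction \eqref{rank_one_suffices}, observes that the block supported on $I_l$ applied to $\xi\otimes\eta$ only sees $\chi_{I_l}\xi\otimes\eta$, and then applies H\"older's inequality with exponents $\frac{2}{2-p}$ and $\frac{2}{p}$ so that the orthogonality $\sum_l\norm{\chi_{I_l}\xi}_2^2=\norm{\xi}_2^2$ absorbs the $\ell_2$ factor and leaves precisely the $\ell_{p^\flat}$-norm of the blockwise multiplier norms. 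Without this step the argument proves only a strictly weaker theorem requiring $\ell_p$-control of the coefficients.

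Two further points. The opening reduction to Theorem~\ref{intro_main_schatten_result} does not go through as stated: by \eqref{rank_one_suffices} one must bound $\norm{\Op(k(\xi\otimes\eta))}_p$ for arbitrary unit vectors $\xi,\eta\in L_2(\mathbb R)$, and the kernel $k(t,s)\xi(t)\eta(s)$ need not belong to $\dot{B}^{\nicefrac{1}{p}-\nicefrac{1}{2}}_{p,p}(\mathbb R,L_2(\mathbb R))$ just because $k$ lies in the $L_\infty$-valued class, so the Schatten theorem cannot be invoked directly; the paper's Schur result is proved independently of it. Also, your remark that uniform control "upgrades the smoothness index from $1/p-1/2$ to $1/p^\flat$" is vacuous, since $\nicefrac{1}{p^\flat}=\nicefrac{1}{p}-\nicefrac{1}{2}$; what changes relative to the Schatten result is the integrability index ($p\to p^\flat$) and the coefficient space ($L_2\to L_\infty$). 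Finally, for the homogeneous $L_\infty$-valued class the naive decomposition $k=\sum_j k_j$ need not converge; the paper uses the modified representation $k(t,s)=c(t)+\sum_j\left(k_j(t,s)-k_j(t,0)\right)$ with $\norm{c}_\infty\lesssim\norm{k}_\infty+|k|_{\dot{B}^{\nicefrac{1}{p^\flat}}_{p^\flat,p}(\mathbb R,L_\infty(\mathbb R))}$, and this is where the $\norm{k}_\infty$ term genuinely enters; your "low-frequency control" heuristic points in the right direction but needs this precise mechanism.
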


\section{Wavelets and Vector-Valued Besov Spaces}\label{besov_section}

Here we approach the construction of vector-valued Besov spaces through Meyer's
wavelet characterisation.
Recall that an orthonormal wavelet is a function $\varphi\in L_2(\mathbb{R})$
such that the family
\[
	\left\{ \varphi_{j,k}(x)=2^{\nicefrac{k}{2}}
	\varphi(2^kx-j) :j,k\in\mathbb Z\right\}
\]
of translations and dilations of $\varphi$ forms an orthonormal basis of
$L_2(\mathbb{R}).$
We call this family of functions the \textit{wavelet system}.

A celebrated theorem of Daubechies \cite{Daubechies:1988} states that for every $N\geq 0$ there exists
an $N$-times continuously differentiable compactly supported wavelet. 
If $f$ is a locally integrable function and $\varphi$ is a compactly supported continuous wavelet then the wavelet coefficient $\langle \psi_{j,k},f\rangle$ is well-defined.
{\hl 
Recall that the homogeneous Besov space $\dot{B}^s_{p,q}(\mathbb{R})$ for $s \in \mathbb{R}$, $0 < p,q\leq \infty$ may be described as the class of tempered distributions $f\in \mathcal{S}'(\mathbb{R})$ such that
\[
    \left(2^{ns}\|\Delta_n f\|_{L_p(\mathbb{R})})_{n\in \mathbb{Z}} \in \ell_q(\mathbb{Z}\right).
\]
Here, $\{\Delta_n\}_{n\in \mathbb{Z}}$ is a Littlewood-Paley decomposition of $\mathbb{R}$, see e.g. \cite[Section 2.2.1]{Grafakos:2014} or \cite[Section 2.4]{Sawano:2018} for details. The $\ell_{q}$-quasi-norm
of the above sequence is $|f|_{\dot{B}^s_{p,q}}$, the homogeneous Besov semi-quasi-norm of $f$. Observe that as polynomial
functions have Fourier transform supported at $\{0\}$,
it follows by definition that all polynomials belong to $\dot{B}^s_{p,q}(\mathbb{R})$ with vanishing semi-quasi-norm.

Homogeneous Besov spaces admit a simple characterisations in terms of wavelet coefficients. The first such characterisation is due to Meyer \cite{Meyer:1987}, who proved
that if $\psi$ is a Schwartz class wavelet then a distribution $f\in \mathcal{S}'(\mathbb{R})$ belongs to $\dot{B}^s_{p,q}(\mathbb{R})$ if and only if
\[
    \left(2^{j\left(s+\nicefrac{1}{2}-\nicefrac{1}{p}\right)}
	\|\left(\langle f,\psi_{j,k}\rangle\right)_{k\in \mathbb{Z}}
	\|_{\ell_p\left(\mathbb{Z}\right)}\right)_{j\in \mathbb{Z}} \in \ell_q\left(\mathbb{Z}\right).
\]
See also \cite[Section 2.4]{Sawano:2018}.
A Schwartz class wavelet $\psi$ necessarily has vanishing moments to all orders (that is, $\int_{\mathbb{R}} t^l\psi(t)\,dt=0$ for all $l\geq 0$) \cite[Chapter 3, Section 7]{Meyer:1992}, and hence this characterisation
is at least consistent with the fact that all polynomials belong to $\dot{B}^{s}_{p,q}(\mathbb{R}).$

In our case we will require wavelets that are compactly supported. A compactly supported wavelet $\varphi$ can have at most a finite degree of regularity, because otherwise all moments of $\varphi$ vanish and this would imply that $\varphi=0$ \cite[Theorem 3.8]{HernandezWeiss:1996}.

The homogeneous Besov seminorm can be estimated in terms of wavelet coefficients with a finite degree of smoothness, as in Meyer \cite[Chapter 6, Section 10]{Meyer:1992}. This issue is somewhat more subtle than 
with Schwartz class wavelets. The first reason is that if $\varphi$ is a $C^N$-wavelet then the wavelet coefficient $\langle f,\varphi_{j,k}\rangle$ is not defined for all tempered distributions $f$, but instead
only for those with sufficiently high regularity. The second issue is that wavelets with a finite degree of smoothness have only a finite number of vanishing moments.
}
\begin{thm}
    Let $s \in \mathbb{R}$ and $0<p,q\leq \infty$ {\hl satisfy $s > \max\{\frac{1}{p}-1,0\}.$} Let $\varphi$ be a compactly supported $C^N$-wavelet, where $N> |s|.$
    
    {\hl A distribution $f \in \mathcal{S}'(\mathbb{R})$ belongs to the homogeneous Besov space $\dot{B}^{s}_{p,q}(\mathbb{R})$ if and only if there exists a sequence of polynomials $(P_{j,k})_{j,k\in \mathbb{Z}}$
    and constants $(c_{j,k})_{j,k\in \mathbb{Z}}$ such that
    \[
		f = \sum_{j,k\in \mathbb{Z}} c_{j,k}\varphi_{j,k}+P_{j,k},
    \]
    where the series converges with respect to topology of $\mathcal{S}'(\mathbb{R})$ and
    \begin{equation*}
        \left(2^{n\left(s+\nicefrac{1}{2}-\nicefrac{1}{p}\right)}\|\left(c_{j,k}\right)_{k\in \mathbb{Z}}\|_{\ell_p\left(\mathbb{Z}\right)}\right)_{n\in \mathbb{Z}} \in \ell_q\left(\mathbb{Z}\right).
    \end{equation*}
    The infimum of the $\ell_q(\mathbb{Z})$ quasi-norm of the above sequence over all representations of $f$ is equivalent to corresponding Besov semi-quasi-norm of $f$.}
\end{thm}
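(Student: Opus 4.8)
The plan is to reduce the statement to an equivalence of sequence spaces and to transport it from the Schwartz-wavelet characterisation of Meyer, quoted above, by an almost-orthogonality argument. Write $\dot b^s_{p,q}$ for the space of all $(c_{j,k})_{j,k\in\mathbb Z}$ with $\bigl(2^{n(s+\nicefrac12-\nicefrac1p)}\|(c_{j,k})_k\|_{\ell_p}\bigr)_n\in\ell_q$, fix a Schwartz-class wavelet $\psi$, and let $\varphi$ be the given compactly supported $C^N$-wavelet. First I would record the two structural facts that drive everything: a compactly supported $C^N$-wavelet is not only $N$-times differentiable but also has vanishing moments $\int t^l\varphi(t)\,dt=0$ for $0\le l\le N$ (otherwise it could not stay orthogonal to its coarser dilates while remaining smooth and compactly supported), so the hypothesis $N>|s|$ furnishes simultaneously more than $|s|$ derivatives and more than $|s|$ vanishing moments. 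These are exactly the size, smoothness and cancellation conditions that make each normalised $\varphi_{j,k}$ a smooth molecule for $\dot B^s_{p,q}$ throughout the admissible range $s>\max\{\nicefrac1p-1,0\}$.

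\emph{Sufficiency (synthesis).} Given $(c_{j,k})\in\dot b^s_{p,q}$, I would invoke the molecular decomposition: since each $\varphi_{j,k}$ is a molecule, the series $\sum_{j,k}c_{j,k}\varphi_{j,k}$ converges in $\mathcal S'/\mathcal P$ to a distribution $f$ with $|f|_{\dot B^s_{p,q}}\lesssim\|(c_{j,k})\|_{\dot b^s_{p,q}}$. Convergence is only modulo polynomials because the coarse scales $j\to-\infty$ need not converge in $\mathcal S'$ on the nose; telescoping the coarse part produces polynomial corrections, which is precisely the role played by the terms $P_{j,k}$ in the statement. This yields the ``if'' implication together with one half of the norm equivalence.

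\emph{Necessity (analysis and reconstruction).} Given $f\in\dot B^s_{p,q}$, the hypothesis $s>\max\{\nicefrac1p-1,0\}$ is exactly what is needed for $f$ to admit a locally integrable realisation, so that the pairings $c_{j,k}:=\langle f,\varphi_{j,k}\rangle$ are well defined in spite of $\varphi$ being merely $C^N$ rather than Schwartz. To see that $(c_{j,k})\in\dot b^s_{p,q}$ I would compare with the Schwartz coefficients $\langle f,\psi_{j',k'}\rangle$, which lie in $\dot b^s_{p,q}$ by Meyer's theorem: the change-of-basis matrix $\langle\varphi_{j,k},\psi_{j',k'}\rangle$ satisfies almost-diagonal estimates whose off-diagonal decay is governed by the $N$ derivatives and $N$ vanishing moments, so $N>|s|$ renders it bounded on $\dot b^s_{p,q}$ by a Schur-type test. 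Finally I would use that $\{\varphi_{j,k}\}$ is an orthonormal basis of $L_2$, together with a density and limiting argument, to upgrade $L_2$-reconstruction to the identity $f=\sum_{j,k}c_{j,k}\varphi_{j,k}$ in $\mathcal S'/\mathcal P$, the polynomial ambiguity once more being absorbed into the $P_{j,k}$.

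The main obstacle is the bookkeeping of the polynomial corrections and the convergence at coarse scales: with only finitely many vanishing moments the naive series $\sum_{j,k}c_{j,k}\varphi_{j,k}$ need not converge in $\mathcal S'$, and one must show that after subtracting an appropriate polynomial from each partial sum it converges in $\mathcal S'/\mathcal P$ with a bound uniform in the truncation. Verifying the almost-diagonal estimate in the regime $p<1$, where the $\ell_p$ quasi-norms interact less forgivingly with the Schur test, and checking that $s>\max\{\nicefrac1p-1,0\}$ is precisely the threshold guaranteeing both the well-definedness of $\langle f,\varphi_{j,k}\rangle$ and the molecule synthesis, is the delicate part; the remainder is the routine transport of the known Schwartz-wavelet statement.
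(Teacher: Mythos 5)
The paper does not actually prove this theorem; it is quoted from the literature (Meyer, \emph{Wavelets and Operators}, p.~201, and Cohen, Theorem~3.7.7), and your outline --- vanishing moments of a compactly supported $C^N$-wavelet, molecular synthesis for the sufficiency, an almost-diagonal change of basis against a Schwartz-class wavelet for the necessity, and polynomial bookkeeping at the coarse scales --- is precisely the standard argument given in those references. Your sketch is correct as far as it goes, with the caveat that it is an outline rather than a proof: the almost-diagonality estimate in the regime $p<1$ and the $\mathcal{S}'$-convergence modulo polynomials, which you yourself flag as the delicate points, are exactly the steps that would need to be executed in full (and ``exactly the threshold for a locally integrable realisation'' overstates the role of $s>\max\{\nicefrac{1}{p}-1,0\}$, which is sufficient here but not a sharp integrability criterion).
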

{\hl This theorem is stated in essentially the same form as \cite[pp. 201]{Meyer:1992}, with modifications for the case $p<1$ as in \cite[Theorem 3.7.7]{Cohen:2003}. The case with $s \leq \max\{\frac{1}{p}-1,0\}$ is discussed in \cite[Remark 3.7.5]{Cohen:2003} but is not relevant for our present applications.}

{\hl  If $E$ is a Banach space, then the space of $E$-valued tempered distributions $\mathcal{S}'(\mathbb{R},E)$ is defined
as the space of all continuous linear maps $T:\mathcal{S}(\mathbb{R})\to E$. The space $\mathcal{S}'(\mathbb{R},E)$ is equipped with a topology of pointwise norm-convergence, see \cite[Definition 2.4.24]{HvNVW:2016}. We shall take the wavelet characterisation of Besov spaces as motivation for the following definition:}
\begin{defn}
	\label{defn:besov}
    Let $E$ be a Banach space, and let {\hl $f \in \mathcal{S}'(\mathbb{R},E)$.
    Let $s\in \mathbb{R}$ and $0<p,q\leq \infty$ satisfy $s > \max\{\frac{1}{p}-1,0\}.$ Let $\varphi$ be a compactly supported $C^N$-wavelet, where $N>|s|.$
    Say that $f$ belongs to the homogeneous Besov space $\dot{B}^{s}_{p,q}(\mathbb{R},E)$ if $f$ can be represented as
    \[
		f = \sum_{j,k\in \mathbb{Z}} c_{j,k}\varphi_{j,k}+P_{j,k},
    \]
    where the series converges with respect to the topology of $\mathcal{S}'(\mathbb{R},E)$ where $P_{j,k}$ are polynomials with coefficients in $E$, and $c_{j,k}\in E$ are such that
    \begin{equation*}
		\norm{\left(2^{n(s+\nicefrac{1}{2}-\nicefrac{1}{p})}
			\norm{\left(
				\norm{
					c_{j,k}
				}_E\right)_{k\in \mathbb{Z}}
			}_{\ell_p(\mathbb{Z})}\right)_{n\in \mathbb{Z}}
		}_{\ell_q(\mathbb{Z})} < \infty.
    \end{equation*}
    }
    {\hl The Besov semi-quasi-norm $\abs{f}_{\dot{B}^{s}_{p,q}(\mathbb{R},E)}$ of $f$ is defined to be the infimum of the above quantity over all representations of $f.$ }
\end{defn}

For our present purposes we will only
need $E = L_2(\mathbb{R})$ and $E = L_\infty(\mathbb{R})$. 

\section{$n$-term approximation of $L_2(\mathbb{R})$-valued functions}
The ideas at the heart of Birman and Solomyak's original proofs revolve around
developments in non-linear approximation theory for piecewise polynomials.

Let $(X,\norm{\cdot}_X)$ be a quasi-normed linear space. A sequence of subsets $(X_n)_{n=0}^\infty$
of $X$ will be called an \emph{approximation {\hl scheme}} for $X$ if 
\begin{enumerate}
	\item We have that $X_0=\left\{ 0 \right\}$.
	\item For all $n\geq 0$, $X_n\subseteq X_{n+1}$.
	\item For all $n\geq 0$, and any $a\in\mathbb C$, $aX_n=X_n$.
	\item There exists an integer $k>0$, such that for any
		$n\in I$, $X_n+X_n\subseteq X_{kn}$.
	\item The set $\bigcup_{n=0}^\infty X_n$ is dense in $X$.
\end{enumerate}
If $X_n+X_n=X_n$ for every $n\geq 0$, we call say that
$(X_n)_{n=0}^\infty$ is a linear approximation {\hl scheme}, and otherwise we will
call it non-linear.

\begin{defn}
	Given a quasi-normed space $(X,\norm{\cdot}_X)$,
	and an approximation {\hl scheme} $(X_n)_{n=0}^\infty$ of (possibly non-linear)
	subsets of $X$
	the $n$th \textit{approximation number} for an element
	$f\in X$ is defined by
	\[
		E_n(f)_X=\inf\limits_{g\in X_n}\norm{f-g}_X.
	\]
	Denote $E(f)_X = (E_n(f)_X)_{n=0}^\infty$ for the sequence of approximation numbers.

    For $\alpha>0$, and $q\in(0,\infty]$, define the quasi-norm
\[
	\norm{f}_{\mathcal A^\alpha_q}=
	\norm{
		E(f)_X
	}_{\ell_{\alpha^{-1},q}}.
\]
The \textit{approximation space} 
\[
\mathcal A^\alpha_q=\mathcal A^\alpha_q(X,(X_n)_{n\in\mathbb N})
\]
is the
set of all $f\in X$ such that $\norm{f}_{\mathcal A^\alpha_q}<\infty$.
\end{defn}

Let us now consider a nonlinear approximation scheme $(T_n\otimes E)_{n\ge 0}$,
for some Banach space $E$, where for each $n\in\mathbb N$,
\begin{equation}\label{n_term_wavelet_scheme}
	T_n=
	\left\{ 
		f=\sum_{l=1}^n c_l\varphi_{j_l,k_l}:
		c_l\in \mathbb{C},\ j_l,k_l\in\mathbb Z
	\right\},
\end{equation}
that is the space of all 
functions given by a linear combination of any $n$
functions in the wavelet system.

Before we may state our approximation theorem, we will also need a variant of
the discrete Hardy inequality.

\begin{lem}[{See, for example, \cite[Chapter~2, Lemma~3.4]{DeVore:Lorentz:1993}}]
	\label{lem:hardy}
	For any two
	positive, monotonically decreasing, sequences $(a_n)_{n\in\mathbb N}$, 
	$(b_n)_{n\in\mathbb N}$, of non-negative real numbers, if there exist constants
	$\mu,r,C>0$, such that
	\[
		a_n\le
		Cn^{-r}
		\left( 
			\sum\limits_{k=n}^\infty k^{r\mu-1}b_k^\mu
		\right)^{\nicefrac{1}{\mu}},
	\]
	for all $n\ge 1$, then it follows for all
	$0<q\le \infty$ and $s>r$ that
	\[
		\norm{
			(a_n)_{n\in\mathbb N}
		}_{\ell_{s^{-1},q}}
		\lesssim_{\mu,s,r,C}
		\norm{
			(b_n)_{n\in\mathbb N}
		}_{\ell_{s^{-1},q}}.
	\]
\end{lem}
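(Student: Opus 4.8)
The plan is to pass from the given pointwise (``tail'') bound to a comparison of the two Lorentz quasi-norms by discretising along the dyadic scale $n=2^m$, exploiting monotonicity to replace the weighted sums by their dyadic samples. Since $(a_n)$ and $(b_n)$ are already nonincreasing, their decreasing rearrangements are themselves, so that $\norm{(a_n)}_{\ell_{s^{-1},q}}^q\approx\sum_{n\ge 1}n^{sq-1}a_n^q$ and likewise for $b$ (with the obvious modification when $q=\infty$). Splitting the index set into dyadic blocks $I_m=\{2^m\le n<2^{m+1}\}$, $m\ge 0$, and using monotonicity to compare $a_n$ on $I_m$ with $a_{2^m}$ (and $b_n$ with $b_{2^{m+1}}$), one obtains the two-sided comparison
\[
    \norm{(a_n)}_{\ell_{s^{-1},q}}^q\lesssim\sum_{m}2^{msq}a_{2^m}^q,\qquad
    \sum_m 2^{msq}b_{2^m}^q\lesssim\norm{(b_n)}_{\ell_{s^{-1},q}}^q,
\]
so it suffices to prove $\sum_m 2^{msq}a_{2^m}^q\lesssim\sum_m 2^{msq}b_{2^m}^q$.

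Next I would feed the hypothesis into this dyadic picture. Evaluating the assumed inequality at $n=2^m$ and estimating the tail sum blockwise, using $b_k\le b_{2^{m'}}$ and $k\approx 2^{m'}$ on $I_{m'}$, gives $\sum_{k\ge 2^m}k^{r\mu-1}b_k^\mu\lesssim\sum_{m'\ge m}2^{m'r\mu}b_{2^{m'}}^\mu$. Writing $\tilde a_m=2^{ms}a_{2^m}$ and $\tilde b_m=2^{ms}b_{2^m}$, this rearranges to
\[
    \tilde a_m\lesssim\Big(\sum_{l\ge 0}w_l^\mu\,\tilde b_{m+l}^\mu\Big)^{1/\mu},\qquad w_l:=2^{l(r-s)}.
\]
This is the crucial point at which the assumption $s>r$ enters: it forces the weights $w_l$ to decay geometrically in $l$, so that $\sum_l w_l^{\theta}<\infty$ for every $\theta>0$. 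The remaining task is thus the boundedness on $\ell_q$ of the convolution-type operator with kernel $(w_l)$, applied to the shifted sequence $\tilde b$.

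To finish I would split into two regimes according to the sign of $q-\mu$. When $q\ge\mu$ I would raise to the power $\mu$ and apply Minkowski's inequality in $\ell_{q/\mu}$ (legitimate since $q/\mu\ge 1$) to the sum over $l$ of shifts of $(\tilde b_m^\mu)_m$, producing the convergent factor $\sum_l w_l^\mu$; when $q<\mu$ I would instead invoke the $(q/\mu)$-subadditivity $(\sum_l x_l)^{q/\mu}\le\sum_l x_l^{q/\mu}$ and then interchange the summations over $m$ and $l$, producing the convergent factor $\sum_l w_l^q$. In either case $\sum_m\tilde a_m^q\lesssim\sum_m\tilde b_m^q$, and the case $q=\infty$ follows even more directly from $w_l\le 1$. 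The main obstacle is not any single estimate but organising these pieces so that the two exponent regimes $q\ge\mu$ and $q<\mu$ are treated uniformly; the only genuinely quantitative input is the geometric decay of $(w_l)$ coming from $s>r$, which is precisely what makes the convolution sums converge.
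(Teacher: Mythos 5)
Your proof is correct. The paper does not actually prove this lemma but cites it from DeVore and Lorentz \cite[Chapter~2, Lemma~3.4]{DeVore:Lorentz:1993}, and your argument --- Cauchy condensation onto dyadic blocks using the monotonicity of $(a_n)$ and $(b_n)$, reduction of the hypothesis to the shifted bound $\tilde a_m\lesssim\bigl(\sum_{l\ge 0}2^{-l(s-r)\mu}\tilde b_{m+l}^\mu\bigr)^{1/\mu}$, and then $\ell_q$-boundedness of convolution against the geometrically decaying weights (via Minkowski in $\ell_{q/\mu}$ when $q\ge\mu$, and $q/\mu$-subadditivity plus interchange of sums when $q<\mu$) --- is essentially the standard proof found in that reference, so it matches the intended argument.
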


The following characterises the approximation spaces for the space $L_2(\mathbb{R},L_2(\mathbb{R}))$ with respect
to the approximation scheme $(T_n\otimes L_2(\mathbb{R}))_{n=0}^\infty$ {\hl from \eqref{n_term_wavelet_scheme}}. The proof is identical to the corresponding
scalar-valued case \cite[Section 7.6]{DeVore:1998}, we supply it here for convenience.
\begin{thm}
	\label{thm:nonlinear}
	For any index $p\in (0,2)$, we have
	\[
		\dot{B}^{\nicefrac{1}{p}-\nicefrac{1}{2}}_{p,p}(\mathbb R, L_2(\mathbb R)){\hl \cap L_2(\mathbb{R}^2)}=
		\mathcal A^{\nicefrac{1}{p}-\nicefrac{1}{2}}_p
		(L_2(\mathbb R,L_2(\mathbb R)), (T_n\otimes L_2(\mathbb R)_{n\in\mathbb N})).
	\]
	
	In particular, the space
	${\hl L_2(\mathbb{R}^2)\cap}\dot{B}^{\nicefrac{1}{p}-\nicefrac{1}{2}}_{p,p}(\mathbb R, L_2(\mathbb R))$
	is then the space of all $L_2(\mathbb R)$-valued functions $f$ such that
	\[
		\left\{ 
			\norm{\langle \varphi_{j,k},f\rangle}_2
		\right\}_{j,k\in\mathbb Z}\in \ell_{p}(\mathbb Z^2),
	\]
	{\hl and
	\[
		\|f\|_{L_2(\mathbb{R}^2)} < \infty.
	\]
	}
\end{thm}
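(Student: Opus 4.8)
The plan is to deduce the identity of the two spaces from the single sequential fact that, with respect to an orthonormal wavelet basis, best $n$-term approximation in a Hilbert space is realised by retaining the $n$ largest coefficients. Fix a compactly supported $C^N$-wavelet $\varphi$ as in Definition~\ref{defn:besov}, so that $\{\varphi_{j,k}\}_{j,k\in\mathbb Z}$ is an orthonormal basis of $L_2(\mathbb R)$. Viewing $f\in L_2(\mathbb R^2)=L_2(\mathbb R,L_2(\mathbb R))$, every such $f$ expands as $f=\sum_{j,k}c_{j,k}\varphi_{j,k}$ with $L_2(\mathbb R)$-valued coefficients $c_{j,k}=\langle\varphi_{j,k},f\rangle$, and Parseval gives $\|f\|_{L_2(\mathbb R^2)}^2=\sum_{j,k}\|c_{j,k}\|_2^2$. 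Because the scheme $T_n\otimes L_2(\mathbb R)$ of \eqref{n_term_wavelet_scheme} allows arbitrary $L_2(\mathbb R)$-valued coefficients on any chosen set of $n$ wavelets, orthonormality forces the optimal approximant to be the orthogonal projection onto the $n$ coordinates carrying the largest $\|c_{j,k}\|_2$. Writing $(a_m)_{m\ge1}$ for the non-increasing rearrangement of $(\|c_{j,k}\|_2)_{j,k}$, this yields the exact formula $E_n(f)_{L_2}=\bigl(\sum_{m>n}a_m^2\bigr)^{1/2}$.

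With $\alpha=\nicefrac1p-\nicefrac12$ (so $\alpha^{-1}=p^\flat$), the approximation quasi-norm unwinds, for a decreasing sequence, to $\|f\|_{\mathcal A^\alpha_p}^p\approx\|f\|_{L_2}^p+\sum_{n\ge1}n^{\alpha p-1}E_n(f)^p=\|f\|_{L_2}^p+\sum_{n\ge1}n^{-p/2}E_n(f)^p$. On the Besov side, Definition~\ref{defn:besov} with $s=\nicefrac1p-\nicefrac12$ and $q=p$ makes the dilation weight $2^{n(s+\nicefrac12-\nicefrac1p)}$ equal to $1$, whence $|f|_{\dot B^{\nicefrac1p-\nicefrac12}_{p,p}(\mathbb R,L_2(\mathbb R))}\approx\bigl(\sum_{j,k}\|c_{j,k}\|_2^p\bigr)^{1/p}=\|(a_m)\|_{\ell_p}$ (the hypothesis $s>\max\{\nicefrac1p-1,0\}$ needed for that characterisation holds throughout $p\in(0,2)$). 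The theorem therefore reduces to the purely sequential equivalence $\|f\|_{L_2}^p+\sum_{n\ge1}n^{-p/2}E_n^p\approx\sum_m a_m^p$, together with the observation that intersecting with $L_2(\mathbb R^2)$ is exactly what anchors the distribution $f$, defined by the homogeneous Besov theory only modulo polynomials, to a genuine square-integrable function for which Parseval is available.

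For the easy inclusion I would argue by monotonicity: since the $a_m$ decrease, $n\,a_{2n}^2\le\sum_{n<m\le2n}a_m^2\le E_n^2$, so $a_{2n}\le n^{-1/2}E_n$; summing the even and (dominated) odd terms, together with $a_1\le\|f\|_{L_2}=E_0$, gives $\sum_m a_m^p\lesssim\|f\|_{L_2}^p+\sum_{n\ge1}n^{-p/2}E_n^p$. The reverse inclusion is the substance of the theorem and is where Lemma~\ref{lem:hardy} enters: passing to $g_m=a_m^2$ and $\theta=p/2\in(0,1)$, the claim becomes the tail estimate $\sum_n n^{-\theta}\bigl(\sum_{m>n}g_m\bigr)^\theta\lesssim\sum_m g_m^\theta$, which I would obtain by applying Lemma~\ref{lem:hardy} to the decreasing sequence of tails $G_n=\sum_{m>n}g_m$. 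Here the pointwise domination $G_n\le n^{-r}\sum_{k\ge n}k^{r}g_k$ holds for every $r>0$ merely because $k^{r}\ge n^{r}$ for $k\ge n$; choosing $r$ below the critical index $s=\nicefrac{p}{2-p}$ and undoing the substitution then returns $\|f\|_{L_2}^p+\sum_n n^{-p/2}E_n^p\lesssim\sum_m a_m^p$, the $\|f\|_{L_2}^p$ term being controlled by $\sum_m a_m^p$ since $p\le2$.

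The main obstacle, and the one point demanding genuine care, is this Hardy step. The comparison sequence that makes the exponents balance is (after undoing the substitution) morally $n^{1/2}a_n$, which is \emph{not} monotone, whereas Lemma~\ref{lem:hardy} is phrased for monotone sequences; one must therefore either invoke it in the weighted form just indicated, where the weight $k^{r}$ absorbs the growth, or replace the natural comparison sequence by its non-increasing majorant and verify that this does not inflate the relevant $\ell_p$-quasi-norm. Checking that the Lorentz indices $\ell_{1/s,\theta}$ match on both sides of the resulting inequality is the crux of the argument. Finally I would assemble the two inclusions into the claimed equality of spaces with equivalent quasi-norms and read off the \textquotedblleft in particular\textquotedblright\ statement, namely that membership is equivalent to $\{\|\langle\varphi_{j,k},f\rangle\|_2\}_{j,k}\in\ell_p(\mathbb Z^2)$ together with $\|f\|_{L_2(\mathbb R^2)}<\infty$.
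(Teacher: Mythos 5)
Your proposal is correct and follows essentially the same route as the paper's proof: the exact formula $E_n(f)=\bigl(\sum_{m>n}a_m^2\bigr)^{1/2}$ coming from orthonormality of the wavelet system and the decreasing rearrangement, the discrete Hardy inequality (Lemma~\ref{lem:hardy}) for the embedding of the Besov space into the approximation space, and the monotonicity estimate $n\,a_{2n}^2\le E_{n-1}(f)^2$ for the converse, with $a_1=\|f\|_{L_2(\mathbb{R}^2)}$ accounting for the intersection with $L_2(\mathbb{R}^2)$. The ``main obstacle'' you flag dissolves if you invoke Lemma~\ref{lem:hardy} exactly as the paper does, with $\mu=2$ and $r=\nicefrac{1}{2}$ applied to the already-monotone pair $E_n(f)/(n+1)^{\nicefrac{1}{2}}$ and $(a_n)$ (equivalently, after your substitution, to $n^{-1}G_n$ rather than $G_n$, so that the hypothesis takes the required form $a_n\le Cn^{-r}(\sum_{k\ge n}k^{r\mu-1}b_k^{\mu})^{\nicefrac{1}{\mu}}$); no non-monotone comparison sequence ever enters.
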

\begin{proof}
	It follows from Definiton~\ref{defn:besov} that a function
	$f\in \dot{B}^{\nicefrac{1}{p}-\nicefrac{1}{2}}_{p,p}(\mathbb R, L_2(\mathbb{R}))$
	if and only if
	\[
		\sum\limits_{j,k\in\mathbb Z}
		\norm{
			\langle 
				\varphi_{j,k},f
			\rangle
		}_2^p<\infty.
	\]
	Let $(b_n)_{n\in\mathbb N}$ denote the decreasing rearrangement of the
	sequence
	\[
		\left\{ \norm{
			\langle\varphi_{j,k},f\rangle}_2
		\right\}_{j,k\in\mathbb Z}.
	\]
	Since the spaces $\varphi_{j,k}\otimes L_2(\mathbb{R})$ are orthogonal in $L_2(\mathbb{R},L_2(\mathbb{R}))$, it follows 
	by the definition of the approximation numbers that we have
	\[
		E_n(f)=
		\inf\left\{ \norm{f-g}_{L_2(\mathbb R,{\hl L_2(\mathbb{R})})}:f\in T_n{\hl \otimes L_2(\mathbb{R})} \right\}
		=
		\left( \sum\limits_{k=n+1}^{\infty}b_k^2 \right)^{\nicefrac{1}{2}}.
	\]
	By the discrete Hardy inequality,
	it follows for any $r<2$, and $0<q\le \infty$ that
	\[
		\norm{
			\left\{ 
				\frac{E_n(f)}{(n+1)^{\nicefrac{1}{2}}}
			\right\}_{n\in\mathbb N}
		}_{\ell_{r,q}}\lesssim
		\norm{(b_n)_{n\in\mathbb N}}_{\ell_{r,q}}.
	\]
	In particular,
	\[
		\norm{f}_{
			\mathcal A_{q}^{\nicefrac{1}{r}-\nicefrac{1}{2}}
			\left( L_2(\mathbb R,L_2(\mathbb{R})), 
			(T_n\otimes L_2(\mathbb R))_{n\in\mathbb N} \right)
		}
		\lesssim\norm{\left( b_n \right)_{n\in\mathbb N}}_{\ell_{r,q}}.
	\]
	Simplifying for the case when $p=q=r$, it follows from the definition
	of the sequence $(b_n)_{n\in\mathbb N}$, and Definition~\ref{defn:besov},
	that we have
	\[
		\norm{f}_{
			\mathcal A_{p}^{\nicefrac{1}{p}-\nicefrac{1}{2}}
			\left( L_2(\mathbb R,L_2(\mathbb{R})), 
			(T_n\otimes L_2(\mathbb R))_{n\in\mathbb N} \right)
		}
		\lesssim
		{\hl |b_0|+}\left( 
			\sum\limits_{j,k}
			\norm{\langle \varphi_{j,k},f\rangle}_2^p
		\right)^{\nicefrac{1}{p}}
		\approx
		{\hl |b_0|+}|{f}|_{\dot{B}_{p,p}^{\nicefrac{1}{p}-\nicefrac{1}{2}}
		(\mathbb R,L_2(\mathbb R))}.
	\]
	Given that the sequence $(b_n)_{n\in\mathbb N}$ is monotonically decreasing,
	$b_{2n}^2\le b_k^2$, for all $0\le k<2n$.
	In turn, we have that
	\[
		nb_{2n}^2\le
		\sum\limits_{k=n}^{2n+1}b_k^2
		\le
		\sum\limits_{k=n}^{\infty}b_k^2
		=E_{n-1}(f)^2.
	\]
	As such, we have that $n^{\nicefrac{1}{2}}b_{2n}\le E_{n-1}(f)$, and so
	$(2n)^{\nicefrac{1}{2}}b_{2n}\lesssim E_{n-1}(f)$.
	The reverse inclusion then follows.

	From this {\hl and $b_0 = \|f\|_{L_2(\mathbb{R}^2)}$} it follows that
	\[
{\hl 		\|f\|_{L_2(\mathbb{R}^2)}+}|f|_{\dot{B}^{\nicefrac{1}{p}-\nicefrac{1}{2}}_{p,p}(\mathbb R,L_2(\mathbb{R}))}
		\approx
		\norm{\left( b_n \right)_{n\in\mathbb N}}_{\ell_p}
		\le
		\norm{f}_{
			\mathcal A_{p}^{\nicefrac{1}{p}-\nicefrac{1}{2}}
			\left( L_2(\mathbb R,L_2(\mathbb{R})), (T_n\otimes L_2(\mathbb{R}))_{n\in\mathbb N} \right)
		}.
	\]
	This completes the proof.
\end{proof}

\section{Approximation of Singular Values for Integral Operators}\label{singular_values_section}

Both Birman and Solomyak's initial research into DOIs
\cite{Birman:Solomyak:1966, Birman:Solomyak:1967}, and their later
investigations into integral operators \cite{Birman:Solomyak:1977}, capitalise
upon the insight that approximation theory techniques allow for one
to effectively find norm estimates for integral operators.

Observe that the nested family of subsets 
\[
    \mathcal{R}_n = \{T\in \mathcal{K}(\mathcal{H})\;:\;\rank(T)\leq n\},\quad n\geq 0
\]
is an approximation scheme for $\mathcal{K}(\mathcal{H}),$
and the singular value sequence $\mu(T)$ is precisely the sequence of approximation numbers $E(T)_{\mathcal{K}(\mathcal{H})}.$ Correspondingly,
the Schatten--von Neumann ideals can equivalently be described as approximation spaces,
\[
    \mathcal{L}_{p,q}(\mathcal{H}) = \mathcal A^{\nicefrac{1}{p}}_{q}(\mathcal{K}(\mathcal{H}),(\mathcal{R}_n)_{n=0}^\infty).
\]
%
The primary goal of Birman and Solomyak's survey \cite{Birman:Solomyak:1977}
is to estimate the singular values of integral operators $\Op(k)$ in terms of $k$.
The main difficulty is that there is no sufficiently general estimate for the operator norm of an integral operator in terms of its kernel.

However, it is well-known that the Hilbert--Schmidt norm is given by
\[
	\norm{\Op(k)}_2=\norm{k}_{L_2(\mathbb R^2)}.
\]
The $\mathcal L_2$-approximation numbers are then given by
\[
	e_n(T)=
	\inf
	\left\{ 
		\norm{T-R}_2:
		\rank(R)\le n
	\right\},
\]
for each $n\ge 0$.
Of course, these are the approximation numbers for $\mathcal L_2$, with
respect to the approximation {\hl scheme} $(\mathcal{R}_n)_{n=0}^\infty$. That is, $e_n(T) = E_n(T)_{\mathcal{L}_2(\mathcal{H})}.$

The following simple lemma is at the heart of Birman and Solomyak's results, and allows us to replace the operator
norm with the Hilbert-Schmidt norm.
\begin{lem}[{\cite[Lemma~1.3]{Birman:Solomyak:1977}}]
	\label{lem:mu:estimate}
	For any compact operator $T$, and every $n\ge 1$, we have that
	\[
		\mu(2n,T)\le n^{-\nicefrac{1}{2}}e(n,T).
	\]
\end{lem}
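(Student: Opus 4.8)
The plan is to reduce the claim to an elementary inequality for the singular value sequence, the only genuine input being the Schmidt (Eckart--Young) description of best Hilbert--Schmidt approximants. First I would record that, since the optimal rank-$n$ approximant of $T$ in the $\mathcal{L}_2$-norm is obtained by truncating the singular value expansion of $T$ after its $n$ largest singular values, the Hilbert--Schmidt approximation numbers are exactly the tails of $\mu(T)$:
\[
	e(n,T) = \left( \sum_{k=n}^\infty \mu(k,T)^2 \right)^{\nicefrac{1}{2}},
\]
where the indexing follows the convention $\mu(0,T) = \norm{T}_\infty \ge \mu(1,T) \ge \cdots$ of the paper. This is the one step that is not purely formal, although it is classical.

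Granting this identity, the rest is a counting argument exploiting monotonicity of $\mu(\cdot,T)$. Since the singular values are non-increasing, $\mu(2n,T) \le \mu(k,T)$ for every $k$ in the range $n \le k \le 2n-1$, and there are precisely $n$ such indices. Squaring and summing over this block gives
\[
	n\, \mu(2n,T)^2 \le \sum_{k=n}^{2n-1} \mu(k,T)^2 \le \sum_{k=n}^\infty \mu(k,T)^2 = e(n,T)^2.
\]
Dividing by $n$ and taking square roots yields $\mu(2n,T) \le n^{-\nicefrac{1}{2}} e(n,T)$, which is the assertion.

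I do not expect a serious obstacle: once the approximation numbers are identified with the tail sums, the lemma is immediate. The only point requiring a little care is bookkeeping with the indexing; one must ensure that the block of $n$ indices $\{n,\ldots,2n-1\}$ used in the counting step lies entirely inside the retained tail $\{k : k\ge n\}$, which it does under the $0$-based convention above. If one instead adopts a $1$-based indexing for $\mu$, the intermediate summation ranges shift accordingly, but the argument is unchanged. It is worth noting that the factor $n^{-\nicefrac{1}{2}}$ cannot be substantially improved, as the counting inequalities are asymptotically sharp when the retained singular values are roughly constant on the relevant block.
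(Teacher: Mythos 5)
Your argument is correct, but it is not the route the paper takes. You invoke the Schmidt/Eckart--Young theorem to identify $e(n,T)$ with the $\ell_2$-tail of the singular value sequence, $e(n,T)=\bigl(\sum_{k\ge n}\mu(k,T)^2\bigr)^{\nicefrac{1}{2}}$ (your $0$-based bookkeeping here is consistent with the convention $\mu(n,R)=0$ for $\rank(R)\le n$), and then run a counting argument over the block $\{n,\dots,2n-1\}$. The paper instead never identifies the minimizer: it takes an \emph{arbitrary} $R$ with $\rank(R)\le n$, applies the Ky Fan subadditivity $\mu(2n,T)\le\mu(n,T-R)+\mu(n,R)=\mu(n,T-R)$, bounds $\mu(n,T-R)\le n^{-\nicefrac{1}{2}}\norm{T-R}_2$ by the same kind of counting you use, and only then takes the infimum over $R$. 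The two proofs share the elementary inequality $n\,\mu(n,S)^2\le\norm{S}_2^2$, but differ in where the optimization enters. Your version has the advantage of producing, as a byproduct, the exact tail formula for $e(n,T)$ that the paper needs anyway in Proposition~\ref{prop:lpq} (and your indexing of that formula is the consistent one); the paper's version is slightly more robust, since it needs no knowledge of the optimal approximant and would survive replacing the Hilbert--Schmidt norm by any quasi-norm satisfying an estimate of the form $\mu(n,S)\lesssim n^{-r}\norm{S}$. Either proof is complete and correct.
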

\begin{proof}
	For any operator $R$ with $\rank(R)\le n$,
	\[
		\mu(2n,T)\le
		\mu(n,T-R)+\mu(n,R)
		=
		\mu(n,T-R)\le
		n^{-\nicefrac{1}{2}}
		\norm{T-R}_2.
	\]
	The result follows by taking the infimum over all such $R$.
\end{proof}

It is remarkable that this simple lemma is sufficiently sharp for estimates
of $\mathcal L_{p,q}$-norms.
\begin{prop}
	\label{prop:lpq}
	For any compact operator $T$, and indices
	$0<p<2$, $0<q\le\infty$, we have that
	\[
		\norm{T}_{\mathcal L_{p,q}}
		\approx
		\norm{
			\left( 
				\frac{e(n,T)}{(n+1)^{\nicefrac{1}{2}}}
			\right)_{n\in\mathbb N}
		}_{\ell_{p,q}}
		=
		\norm{
			(e(n,T))_{n\in\mathbb N}
		}_{\ell_{( \nicefrac{1}{p}-\nicefrac{1}{2} )^{-1},q}}.
	\]
\end{prop}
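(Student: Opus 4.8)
The plan is to move everything into the Lorentz sequence space $\ell_{p,q}$ and then compare the singular value sequence $(\mu(n,T))_n$ with the weighted Hilbert--Schmidt approximation sequence $(e(n,T)(n+1)^{-\nicefrac{1}{2}})_n$ by two one-sided inequalities. First I would dispose of the rightmost equality, which is purely a statement about Lorentz quasinorms of monotone sequences: since $(e(n,T))_n$ is non-increasing and the weight $(n+1)^{-\nicefrac{1}{2}}$ is non-increasing, the product is non-increasing, and for a non-increasing sequence the $\ell_{\rho,q}$-quasinorm is computed by the usual formula $\norm{(c_n)_n}_{\ell_{\rho,q}}\approx\norm{((n+1)^{\nicefrac{1}{\rho}-\nicefrac{1}{q}}c_n)_n}_{\ell_q}$. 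Substituting $c_n=e(n,T)(n+1)^{-\nicefrac{1}{2}}$ with $\rho=p$ shifts the reciprocal index from $\nicefrac{1}{p}$ to $\nicefrac{1}{p}-\nicefrac{1}{2}$, i.e.\ produces the Lorentz index $(\nicefrac{1}{p}-\nicefrac{1}{2})^{-1}$, exactly as claimed. Using the identification of the Schatten--Lorentz ideal as an approximation space recorded above, namely $\norm{T}_{\mathcal L_{p,q}}\approx\norm{(\mu(n,T))_n}_{\ell_{p,q}}$, the proposition then reduces to the equivalence $\norm{(\mu(n,T))_n}_{\ell_{p,q}}\approx\norm{(e(n,T)(n+1)^{-\nicefrac{1}{2}})_n}_{\ell_{p,q}}$.

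For the inequality $\lesssim$ I would invoke Lemma~\ref{lem:mu:estimate} directly: it gives $\mu(2n,T)\lesssim e(n,T)(n+1)^{-\nicefrac{1}{2}}$, and since $(\mu(n,T))_n$ is non-increasing this also controls the odd-indexed terms, so $\mu(m,T)\lesssim e(\lfloor m/2\rfloor,T)(\lfloor m/2\rfloor+1)^{-\nicefrac{1}{2}}$ for all $m$ (the cases $m=0,1$ being handled by $\mu(0,T)=\norm{T}_\infty\le\norm{T}_2=e(0,T)$). Feeding this into the monotone-sequence formula for the $\ell_{p,q}$-quasinorm and regrouping the even and odd indices (a harmless doubling of the summation index) yields the bound, and hence $\norm{T}_{\mathcal L_{p,q}}\lesssim\norm{(e(n,T))_n}_{\ell_{(\nicefrac{1}{p}-\nicefrac{1}{2})^{-1},q}}$.

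For the reverse inequality $\gtrsim$ I would use the Schmidt/Eckart--Young description of the best rank-$n$ Hilbert--Schmidt approximation, $e(n,T)=(\sum_{k\ge n}\mu(k,T)^2)^{\nicefrac{1}{2}}$, so that $e(n,T)(n+1)^{-\nicefrac{1}{2}}\le C n^{-\nicefrac{1}{2}}(\sum_{k\ge n}k^{0}\mu(k,T)^2)^{\nicefrac{1}{2}}$. This is precisely the hypothesis of the discrete Hardy inequality (Lemma~\ref{lem:hardy}) with the parameters $\mu=2$ and $r=\tfrac12$; applying it with $s=\nicefrac{1}{p}$ gives $\norm{(e(n,T)(n+1)^{-\nicefrac{1}{2}})_n}_{\ell_{p,q}}\lesssim\norm{(\mu(n,T))_n}_{\ell_{p,q}}$, valid exactly because $s=\nicefrac{1}{p}>r=\tfrac12$, that is, because $p<2$. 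Combining the two comparisons yields the asserted equivalence.

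I expect the main obstacle to be matching the Hardy hypothesis rather than anything conceptual. The two routines (Lemma~\ref{lem:mu:estimate} downward, Hardy upward) each involve only bookkeeping, but one must keep the Lorentz conventions consistent across the weight shift and treat the endpoint $q=\infty$ (where sums become suprema) in parallel. The single genuinely sharp point is verifying that the exponents $r=\tfrac12$, $s=\nicefrac{1}{p}$ meet the requirement $s>r$ of Lemma~\ref{lem:hardy}: this is the only place where the range of $p$ enters, and it is exactly the interplay between the factor $n^{-\nicefrac{1}{2}}$ coming from the Hilbert--Schmidt norm in Lemma~\ref{lem:mu:estimate} and the admissible Hardy exponents that forces the restriction $0<p<2$.
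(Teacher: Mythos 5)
Your proposal is correct and follows essentially the same route as the paper's proof: Lemma~\ref{lem:mu:estimate} gives the upper bound for $\norm{T}_{\mathcal L_{p,q}}$, and the identity $e(n,T)^2=\sum_{k>n}\mu(k,T)^2$ combined with the discrete Hardy inequality (Lemma~\ref{lem:hardy}, with $\mu=2$, $r=\nicefrac{1}{2}$, $s=\nicefrac{1}{p}>r$) gives the reverse, with the restriction $p<2$ entering exactly where you identify it. Your treatment is if anything slightly more careful than the paper's, since you spell out the Lorentz index shift and the handling of odd indices, which the paper leaves implicit.
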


\begin{proof}
	To start, note that
	\[
		e(n,T)=
		\left( 
			\sum\limits_{k=n+1}^\infty\mu(k,T)^2
		\right)^{\nicefrac{1}{2}},
	\]
	for any $n\ge 0$.
	
	It then follows by application of the discrete Hardy inequality,
	Lemma~\ref{lem:hardy}, when $\mu=2$ and $r=\nicefrac{1}{2}$, that
	\[
		\norm{
			\left(
				\frac{e(n,T)}{(n+1)^{\nicefrac{1}{2}}}
			\right)_{n\geq 0}
		}_{\ell_{p,q}}
		\lesssim
		\norm{
			\left( \mu(n,T) \right)_{n\ge 0}
		}_{\ell_{p,q}},
	\]
	for all $p\in(0,2)$, and all $0<q\le\infty$.

	By application of Lemma~\ref{lem:mu:estimate}, we recover quasi-norm
	equivalence
	for all $p\in(0,2)$, and all $0<q\le\infty$
	\[
		\norm{
			\left( 
				\frac{e(n,T)}{(n+1)^{\nicefrac{1}{2}}}
			\right)_{n\geq 0}
		}_{\ell_{p,q}}
		\approx
		\norm{
			\left( \mu(n,T) \right)_{n\ge 0}
		}_{\ell_{p,q}},
	\]
	which is equivalent to the stated result.
\end{proof}

The following result recovers many of the estimates of Birman and Solomyak
\cite[Theorem~2.4]{Birman:Solomyak:1977}, and Pietsch
\cite{Pietsch:1, Pietsch:2, Pietsch:3}.

Let us note how short the proof of the following result is, in contrast to
the other approaches just noted.

\begin{thm}\label{main_lpq_result}
	For any index $p\in(0,2)$, we have the continuous embedding
	\[
		\Op:\dot{B}^{\nicefrac{1}{p}-\nicefrac{1}{2}}_{p,p}(\mathbb R,L_2(\mathbb R)) {\hl \cap L_2(\mathbb{R}^2)}
		\to\mathcal L_p(L_2(\mathbb R)).
	\]
\end{thm}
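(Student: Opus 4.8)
The plan is to chain together the two approximation-space characterisations already established in the excerpt. The key observation is that both the source space and the target space have been identified as approximation spaces with respect to natural approximation schemes, and the operator $\Op$ intertwines these schemes. First I would recall from Theorem~\ref{thm:nonlinear} that
\[
	\dot{B}^{\nicefrac{1}{p}-\nicefrac{1}{2}}_{p,p}(\mathbb R, L_2(\mathbb R))\cap L_2(\mathbb{R}^2)=
	\mathcal A^{\nicefrac{1}{p}-\nicefrac{1}{2}}_p
	\bigl(L_2(\mathbb R,L_2(\mathbb R)), (T_n\otimes L_2(\mathbb R))_{n\in\mathbb N}\bigr),
\]
so that membership of $k$ in the source space is equivalent to a decay rate on the approximation numbers $E_n(k)$ measured in $L_2(\mathbb R^2)$. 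On the target side, Proposition~\ref{prop:lpq} (with $q=p$) together with the identification $\mathcal L_{p}=\mathcal A^{\nicefrac{1}{p}}_{p}(\mathcal K(\mathcal H),(\mathcal R_n))$ shows that $\Op(k)\in\mathcal L_p$ is governed by the decay of the Hilbert--Schmidt approximation numbers $e(n,\Op(k))$.

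The crucial intertwining step is the following: if $g=\sum_{l=1}^n c_l\varphi_{j_l,k_l}$ is an $n$-term wavelet approximant of $k$ in the scheme $T_n\otimes L_2(\mathbb R)$, then each rank-one-in-the-first-variable piece $\varphi_{j_l,k_l}\otimes c_l$ maps under $\Op$ to a rank-one operator, so $\Op(g)$ has rank at most $n$. Hence, using the Hilbert--Schmidt isometry $\norm{\Op(k)}_2=\norm{k}_{L_2(\mathbb R^2)}$, I would estimate
\[
	e(n,\Op(k))=\inf_{\rank(R)\le n}\norm{\Op(k)-R}_2
	\le \inf_{g\in T_n\otimes L_2(\mathbb R)}\norm{\Op(k)-\Op(g)}_2
	= E_n(k)_{L_2(\mathbb R,L_2(\mathbb R))}.
\]
This inequality transfers the $n$-term wavelet approximation rate directly to a Hilbert--Schmidt rank-$n$ approximation rate for the operator.

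Combining these, I would write the chain
\[
	\norm{\Op(k)}_p
	\approx
	\norm{(e(n,\Op(k)))_{n}}_{\ell_{(\nicefrac{1}{p}-\nicefrac{1}{2})^{-1},p}}
	\le
	\norm{(E_n(k)_{L_2})_{n}}_{\ell_{(\nicefrac{1}{p}-\nicefrac{1}{2})^{-1},p}}
	=\norm{k}_{\mathcal A^{\nicefrac{1}{p}-\nicefrac{1}{2}}_p}
	\lesssim
	\norm{k}_2+\abs{k}_{\dot{B}^{\nicefrac{1}{p}-\nicefrac{1}{2}}_{p,p}(\mathbb R,L_2(\mathbb R))},
\]
where the first equivalence is Proposition~\ref{prop:lpq} with $q=p$, the middle inequality is the intertwining estimate above, and the last step is Theorem~\ref{thm:nonlinear}. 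I expect the only genuinely delicate point to be the rank count in the intertwining step: one must confirm that $\Op$ carries an $n$-term wavelet sum (a sum of $n$ elementary tensors $\varphi_{j_l,k_l}\otimes c_l$) to an operator of rank at most $n$, and verify that the infimum over $T_n\otimes L_2(\mathbb R)$ really does dominate the infimum over all rank-$n$ operators in the correct direction. Everything else is a bookkeeping consequence of the two approximation-space identifications already proved, which is why the proof is so short.
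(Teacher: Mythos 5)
Your proposal is correct and follows essentially the same route as the paper: the paper phrases your intertwining step as the inclusion $T_n\otimes L_2(\mathbb R)\subset\Sigma_n$ (the scheme of $n$-term elementary-tensor kernels, which $\Op$ carries onto $\mathcal R_n$), and then chains Proposition~\ref{prop:lpq} with Theorem~\ref{thm:nonlinear} exactly as you do. Your explicit inequality $e(n,\Op(k))\le E_n(k)_{L_2(\mathbb R,L_2(\mathbb R))}$ is just a more direct rendering of that inclusion at the level of approximation numbers, so there is no substantive difference.
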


\begin{proof}
	Let $\Sigma=\left( \Sigma_n \right)_{n\in\mathbb N}$ be the nested family
	of subsets of $L_2(\mathbb R^2)$, defined for each $n\ge 0$ by
	\[
		\Sigma_n=
		\left\{ 
			k(t,s)=\sum\limits_{j=1}^n x_j(t)y_j(s):
			x_j,y_j\in L_2(\mathbb R)
		\right\}.
	\]
	That is to say that $\Sigma_n$ consists of linear combinations of
	at most $n$ elementary tensors of functions in $L_2(\mathbb R)$.
	Under $\Op$, $\Sigma_n$ is sent
	precisely to the space $\mathcal{R}_n$ of linear operators of rank at most $n$.

	It is then immediate from Proposition~\ref{prop:lpq} that for any fixed
	index $0<q\le \infty$, we may restrict the isometry
	\[
		\Op:L_2(\mathbb R^2)\to \mathcal L_2(L_2(\mathbb R))
	\]
	to the isomorphism
	\[
		\Op:
		\mathcal A^{\nicefrac{1}{p}-\nicefrac{1}{2}}_q(L_2(\mathbb R^2),\Sigma)
		\cong
		\mathcal L_{p,q}(L_2(\mathbb R)).
	\]
	Observe that for every $n\geq 0$ we have
	\[
        T_n\otimes L_2(\mathbb{R})\subset \Sigma_n.
	\]
	It is therefore immediate that we have a continuous inclusion
	\[
		\mathcal A^{\nicefrac{1}{p}-\nicefrac{1}{2}}_q
		(L_2(\mathbb R^2),(T_n\otimes L_2(\mathbb{R}))_{n\in\mathbb N})
		\hookrightarrow
		\mathcal A^{\nicefrac{1}{p}-\nicefrac{1}{2}}_q
		(L_2(\mathbb R^2),\Sigma),
	\]
	however we know from Theorem~\ref{thm:nonlinear} that
	\[
		\dot{B}^{\nicefrac{1}{p}-\nicefrac{1}{2}}_{p,p}
		(\mathbb R,L_2(\mathbb R)){\hl \cap L_2(\mathbb{R}^2)}
		=
		\mathcal A^{\nicefrac{1}{p}-\nicefrac{1}{2}}
		(L_2(\mathbb R^2),(T_n\otimes L_2(\mathbb{R})_{n=0}^\infty)).
	\]
	As such,
	\[
		\Op:\dot{B}^{\nicefrac{1}{p}-\nicefrac{1}{2}}_{p,p}(\mathbb R,L_2(\mathbb R)){\hl \cap L_2(\mathbb{R}^2)}
		\to\mathcal L_p(L_2(\mathbb R)).
	\]
\end{proof}
{\hl This concludes the proof of Theorem \ref{intro_main_schatten_result}, which is simply a restatement of the above result.}

\begin{rmk}
	To conclude this section, let us note that Proposition~\ref{main_lpq_result}
	gives a sufficient condition for an integral operator to be trace class. Namely,
	if $k \in B^{\frac{1}{2}}_{1,1}(\mathbb{R},L_2(\mathbb{R}))$ then $\Op(k)\in \mathcal{L}_1.$
	Furthermore the operator trace of $\Op(k)$ is given by the formula
	\begin{equation*}
        \Tr(\Op(k)) = \int_{\mathbb{R}} k(x,x)\,dx
	\end{equation*}
	provided that one understands the restriction of $k$ to the diagonal $\{(x,x)\;:\;\;x\in \mathbb{R}\}$
	in the correct sense. For further details see \cite{Brislawn:1988}.
\end{rmk}

\section{Double Operator Integrals and Schur Multipliers}\label{schur_section}

Our final task is to show that if a DOI has symbol in some vector-valued Besov
class, then it is an $L_p$-Schur multiplier.
This is a difficult task in Birman and Solomyak's original papers
\cite{Birman:Solomyak:1966,
Birman:Solomyak:1967,
Birman:Solomyak:1973,
Birman:Solomyak:1977}
, which is
achieved through a careful argument, which loosely shows that by replacing
multiplication by an $L_2$-operator with a weighted measure, norm-estimates
for operators with non-smooth symbol may be replaced by those for some
corresponding smooth symbol.
Here we are able to eschew all of these measure-theoretic difficulties by
applying the wavelet characterisation of wavelet vector-valued Besov classes.
The result is then not only more easily attainable, but we remove the need
for the symbol to be compactly supported.
For background on Schur multipliers, see the survey of Aleksandrov and
Peller \cite{Aleksandrov:Peller:2002}.
Recall that for $0 < p \leq 2$, a bounded Borel measurable function $k$ on $\mathbb{R}^2$ is called an $\mathcal{L}_p$-Schur multiplier, $k\in\mathfrak M_p$ if there exists
a constant $C$ such that
\begin{equation*}
    \|\Op(k\phi)\|_p \leq C\|\Op(\phi)\|_p
\end{equation*}
for all $\phi\in L_2(\mathbb{R}^2)$ such that $\Op(\phi)\in \mathcal{L}_p(L_2(\mathbb{R})).$
The $\mathfrak M_p$-quasinorm of $k$ is defined as
\begin{equation}\label{mp_norm_def}
    \|k\|_{\mathfrak M_p} = \sup_{\|\Op(\phi)\|_p\leq 1}\|\Op(k\phi)\|_p.
\end{equation}
It follows directly from the properties of $\mathcal{L}_p$ that for $0 < p < 1$, $\mathfrak M_p$ is a quasi-Banach space and
obeys the $p$-triangle inequality
\begin{equation*}
    \left\|\sum_{j=0}^\infty k_j\right\|_{\mathfrak M_p}^p \leq \sum_{j=0}^\infty \|k_j\|_{\mathfrak M_p}^p.
\end{equation*}
For $1\leq p \leq 2$, $\mathfrak M_p$ is a Banach space, and $\mathfrak M_2 = L_\infty(\mathbb{R}^2).$

Let us note some features of the classes of Schur multipliers.
It follows directly from the properties of $\mathcal{L}_p$ that for $0 < p < 1$, $\mathfrak M_p$ is a quasi-Banach space and
obeys the $p$-triangle inequality
\begin{equation*}
    \left\|\sum_{j=0}^\infty k_j\right\|_{\mathfrak M_p}^p \leq \sum_{j=0}^\infty \|k_j\|_{\mathfrak M_p}^p.
\end{equation*}
For $1\leq p \leq 2$, $\mathfrak M_p$ is a Banach space, and $\mathfrak M_2 = L_\infty(\mathbb{R}^2).$

An important feature of the $\mathfrak M_p$-quasinorm when $0 < p \leq 1$
is that it suffices to compute the supremum in \eqref{mp_norm_def} over rank one operators. The following fundamental fact is \cite[Theorems 8.1 and 8.2]{Birman:Solomyak:1977}:
\begin{equation}\label{rank_one_suffices}
    \norm{k}_{\mathfrak M_p}
    =
    \sup\limits_{\xi,\eta}
    \norm{\Op\left( k\left( \xi\otimes\eta \right) \right)}_{\mathcal L_p},
\end{equation}
where the supremum is taken over all $\xi,\eta$ in the unit ball of
$L_2(\mathbb R)$. This is an immediate consequence of the Schmidt decomposition of a compact operator and the $p$-triangle inequality, see \cite[Lemma 2.2.1]{MS:2020} for a proof of the corresponding matrix case.

The notion of an $\mathcal{L}_p$-Schur multiplier is related to the idea of a double operator integral in the following sense. Let $M_x$
denote the unbounded self-adjoint operator on $L_2(\mathbb{R})$ of multiplication by the coordinate variable $M_x\xi(t) := t\xi(t)$. Then
\begin{equation*}
    T^{M_x,M_x}_{k}(\Op(\phi)) = \Op(k\phi),\quad \phi\in L_2(\mathbb{R}^2).
\end{equation*}
It follows that for all $0 < p \leq 2$ we have
\begin{equation*}
    \|k\|_{\mathfrak M_p} = \|T^{M_x,M_x}_{k}\|_{\mathcal{L}_p\to \mathcal{L}_p}.
\end{equation*}
Similarly, if $A$ and $B$ are unbounded self-adjoint operators on a Hilbert space $\mathcal{H}$ 
with the same spectral type as $M_x$, then
\begin{equation*}
    \|T^{A,B}_{k}\|_{\mathcal{L}_p\to \mathcal{L}_p} = \|k\|_{\mathfrak M_p}.
\end{equation*}
For more details on the relationship between Schur multipliers and double operator integrals, see \cite[Section 1.4]{Birman:Solomyak:2003}.

Recall that for any $p\in(0,2)$, we let $p^\flat=2p(2-p)^{-1}$, in order that
$\nicefrac{1}{p^\flat}=\nicefrac{1}{p}-\nicefrac{1}{2}$.
Our first result is a continuous extension of
\cite[Theorem~3.7]{Aleksandrov:Peller:2002}, which has essentially the same proof.

\begin{thm}
	\label{thm:decomp}
	Fix any function $k\in L_\infty(\mathbb R^2)$, and any index
	$0<p\le 1$.
	Then for any partition of $\mathbb R$ into intervals $I_j=[t_j,t_{j+1})$,
	for $j\in\mathbb Z$,
	let
	\[
		k_j(t,s)=\chi_{I_j}(t)k(t,s),
	\]
	for all $t,s\in\mathbb R$, $j\in\mathbb Z$.
	The following estimate then holds,
	\[
		\norm{k}_{\mathfrak M_p}\le
		\norm{
			\left( \norm{k_j}_{\mathfrak M_p} \right)_{j\in\mathbb Z}
		}_{\ell_{p^\flat}}.
	\]
\end{thm}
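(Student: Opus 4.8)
The plan is to reduce everything to the rank-one supremum formula \eqref{rank_one_suffices}, which is available precisely because $0<p\le 1$, and then to exploit the $p$-triangle inequality together with a single application of Hölder's inequality. Since the intervals $I_j$ partition $\mathbb R$, we have $\sum_{j}\chi_{I_j}\equiv 1$, so $k=\sum_j k_j$ pointwise. Fixing $\xi,\eta$ in the unit ball of $L_2(\mathbb R)$, the key algebraic observation is that, writing $\xi_j=\chi_{I_j}\xi$ and using $\chi_{I_j}^2=\chi_{I_j}$, the kernel factors as $k_j\,(\xi\otimes\eta)=k_j\,(\xi_j\otimes\eta)$. This lets me feed the rank-one operator $\Op(\xi_j\otimes\eta)$, whose every Schatten norm equals $\norm{\xi_j}_2\norm{\eta}_2$, into the defining inequality for the Schur multiplier $k_j$.

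With this decomposition in hand, I would estimate, for a finite set $F\subset\mathbb Z$,
\[
	\norm{\Op\!\Big(\sum_{j\in F}k_j(\xi\otimes\eta)\Big)}_p^p
	\le \sum_{j\in F}\norm{\Op(k_j(\xi_j\otimes\eta))}_p^p
	\le \sum_{j\in F}\norm{k_j}_{\mathfrak M_p}^p\,\norm{\xi_j}_2^p,
\]
where the first inequality is the $p$-triangle inequality for $\mathcal L_p$ (valid for $0<p\le 1$), and the second is the definition of $\norm{k_j}_{\mathfrak M_p}$ applied to $\phi=\xi_j\otimes\eta$, together with $\norm{\eta}_2\le 1$.

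The point where $p^\flat$ enters is the final Hölder step. I would apply Hölder to $\sum_{j}\norm{k_j}_{\mathfrak M_p}^p\norm{\xi_j}_2^p$ with the conjugate exponents $p^\flat/p$ and $2/p$; these are conjugate exactly because $\nicefrac{1}{p^\flat}+\nicefrac12=\nicefrac1p$, which is the defining relation for $p^\flat$. This yields the bound $\big(\sum_j\norm{k_j}_{\mathfrak M_p}^{p^\flat}\big)^{p/p^\flat}\big(\sum_j\norm{\xi_j}_2^2\big)^{p/2}$, and since $\sum_j\norm{\xi_j}_2^2=\norm{\xi}_2^2\le 1$ the second factor drops out. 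Taking $p$-th roots and then the supremum over $\xi,\eta$ in the unit ball, formula \eqref{rank_one_suffices} delivers exactly $\norm{k}_{\mathfrak M_p}\le\norm{(\norm{k_j}_{\mathfrak M_p})_{j\in\mathbb Z}}_{\ell_{p^\flat}}$.

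The main obstacle is passing from finite index sets $F$ to the full sum over $\mathbb Z$. If the right-hand side is infinite there is nothing to prove, so I may assume $(\norm{k_j}_{\mathfrak M_p})_j\in\ell_{p^\flat}$. Since $k\in L_\infty(\mathbb R^2)$ and $\xi,\eta\in L_2(\mathbb R)$, dominated convergence shows that $\sum_{j\in F}k_j(\xi\otimes\eta)\to k(\xi\otimes\eta)$ in $L_2(\mathbb R^2)$ as $F\uparrow\mathbb Z$, hence the operators converge in Hilbert--Schmidt and therefore in operator norm. Operator-norm convergence makes each singular value converge, $\mu(n,\Op(k(\xi\otimes\eta)))=\lim_F\mu(n,\Op(\sum_{j\in F}k_j(\xi\otimes\eta)))$, so Fatou's lemma for the counting measure yields $\norm{\Op(k(\xi\otimes\eta))}_p\le\liminf_F\norm{\Op(\sum_{j\in F}k_j(\xi\otimes\eta))}_p$. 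Combining this with the finite-$F$ estimate closes the argument; apart from this limiting step, the proof is just the clean interplay of the $p$-triangle inequality with Hölder's inequality at the distinguished exponent $p^\flat$.
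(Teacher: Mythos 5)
Your proposal is correct and follows essentially the same route as the paper's own proof: reduction to rank-one operators via \eqref{rank_one_suffices}, the identity $k_j(\xi\otimes\eta)=k_j(\chi_{I_j}\xi\otimes\eta)$, the $p$-triangle inequality, and H\"older with conjugate exponents $p^\flat/p$ and $2/p$. The only difference is that you make explicit the passage from finite to infinite index sets via Fatou's lemma for singular values, a step the paper leaves implicit.
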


\begin{proof}
	Here we use \eqref{rank_one_suffices} to compute the $\mathfrak M_p$-multiplier quasinorm. 
	Let $\xi,\eta \in L_2(\mathbb{R}).$
	It follows from the decomposition $k=\sum_{j\in\mathbb Z}k_j$ and the
	triangle inequality that
	\[
		\norm{
			\Op\left( k\left( \xi\otimes\eta \right) \right)
		}^p_{\mathcal L_p}\le
		\sum\limits_{j\in\mathbb Z}
		\norm{
			\Op\left(k_j\left( \xi\otimes\eta \right)\right)
		}^p_{\mathcal L_p}.
	\]
	It is clear that for any $j\in\mathbb Z$,
	$k_j(\xi\otimes \eta)=k_j(\chi_{I_j}\xi\otimes\eta)$, such that
	\[
		\norm{
			\Op\left( k\left( \xi\otimes\eta \right) \right)
		}_{\mathcal L_p}^p
		\le\sum\limits_{j\in\mathbb Z}
		\norm{k_j}^p_{\mathfrak M_p}\norm{\chi_{I_j}\xi}^p_2\norm{\eta}^p_2.
	\]
	Applying H\"older's inequality, to the right hand side, we see that
	\[
		\norm{\Op\left( k\left( \xi\otimes\eta \right) \right)}_{\mathcal L_p}^p
		\le
		\norm{\left( \norm{k_j}_{\mathfrak M_p}
		\right)_{j\in\mathbb Z}}_{\ell_{p^\flat}}
		\norm{\left( \norm{\chi_{I_j}\xi}_2 \right)_{j\in\mathbb Z}}_{\ell_2}
		\norm{\eta}_2.
	\]
	However, the intervals $I_j$ are disjoint, such that 
	$(\chi_{I_j}\xi)_{j\in\mathbb Z}$ is a sequence of functions orthogonal
	in $L_2(\mathbb R)$, which in turn gives us that
	$\norm{(\norm{\chi_{I_j}\xi}_2)_{j\in\mathbb Z}}_2=\norm{\xi}_2$.
	The estimate
	\[
		\norm{\Op\left( k\left( \xi\otimes\eta \right) \right)}_{\mathcal L_p}^p
		\le
		\norm{\left( \norm{k_j}_{\mathfrak M_p}
		\right)_{j\in\mathbb Z}}_{\ell_{p^\flat}}
		\norm{\xi}_2
		\norm{\eta}_2
	\]
	then gives the result once we take the supremum over all $\xi$ and $\eta$
	in the unit ball of $L_2(\mathbb R)$.
\end{proof}

\begin{crl}
	For any sequence $(k_j)_{j\in\mathbb Z}\subseteq L_\infty(\mathbb R)$,
	let $\psi$ be a compactly supported, bounded function on $\mathbb R$,
	and define
	\[
		k(t,s)=\sum\limits_{j\in\mathbb Z}\psi(t-j)k_j(s).
	\]
	The following estimate then holds,
	\[
		\norm{k}_{\mathfrak M_p}
		\lesssim_{\psi}
		\norm{
			\left( \norm{k_j}_\infty \right)_{j\in\mathbb Z}
		}_{\ell_{p^\flat}}.
	\]
\end{crl}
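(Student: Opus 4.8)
The plan is to deduce this directly from Theorem~\ref{thm:decomp} by partitioning $\mathbb{R}$ into unit intervals and recognising each restricted piece of $k$ as a short sum of separable (rank-one-in-the-symbol) building blocks. Throughout I take $0<p\le 1$, as required by Theorem~\ref{thm:decomp} and the rank-one reduction \eqref{rank_one_suffices}. If $(\|k_j\|_\infty)_{j\in\mathbb Z}\notin\ell_{p^\flat}$ there is nothing to prove, so I assume the right-hand side finite; since $\psi$ is compactly supported, for each fixed $(t,s)$ only finitely many terms of $\sum_j\psi(t-j)k_j(s)$ are nonzero, so $k\in L_\infty(\mathbb{R}^2)$ and $\|k\|_{\mathfrak M_p}$ is well-defined.

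First I would record the elementary estimate for a separable symbol: if $a,b\in L_\infty(\mathbb{R})$ and $(a\otimes b)(t,s)=a(t)b(s)$, then $\Op\big((a\otimes b)\phi\big)=M_b\,\Op(\phi)\,M_a$, where $M_a,M_b$ are the corresponding multiplication operators; since $\mathcal L_p$ is an ideal this yields $\|a\otimes b\|_{\mathfrak M_p}\le\|a\|_\infty\|b\|_\infty$. Next, fix the partition $I_m=[m,m+1)$, $m\in\mathbb Z$, and suppose $\supp\psi\subseteq[-R,R]$. For $t\in I_m$ the factor $\psi(t-j)$ vanishes unless $j$ lies in a fixed finite set $J_m$ of integers with $|J_m|\le N$, where $N=N(\psi)$ is independent of $m$; by the symmetry of the condition in $t-j$, each fixed index $j$ also belongs to at most $N$ of the sets $J_m$. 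Hence $\chi_{I_m}(t)k(t,s)=\sum_{j\in J_m}\chi_{I_m}(t)\psi(t-j)\,k_j(s)$ is a sum of at most $N$ separable symbols, and the $p$-triangle inequality for $\mathfrak M_p$ combined with the separable estimate gives
\[
	\|\chi_{I_m}k\|_{\mathfrak M_p}^p
	\le\sum_{j\in J_m}\|\chi_{I_m}(\cdot)\psi(\cdot-j)\|_\infty^p\,\|k_j\|_\infty^p
	\le\|\psi\|_\infty^p\sum_{j\in J_m}\|k_j\|_\infty^p.
\]

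Finally I would feed these block estimates into Theorem~\ref{thm:decomp}, which gives $\|k\|_{\mathfrak M_p}\le\big\|(\|\chi_{I_m}k\|_{\mathfrak M_p})_{m\in\mathbb Z}\big\|_{\ell_{p^\flat}}$, and then control the $\ell_{p^\flat}$-norm of the block sums. This last step is the \emph{one genuinely delicate point}: the $p$-triangle inequality forces the inner block sum to appear with exponent $p$, whereas Theorem~\ref{thm:decomp} demands the outer $\ell_{p^\flat}$-norm, and since $p^\flat/p=2/(2-p)>1$ there is no embedding in the favourable direction. The compact support of $\psi$ is exactly what rescues the estimate, since each block $J_m$ has at most $N$ terms: the power-mean (Jensen) inequality yields $\big(\sum_{j\in J_m}\|k_j\|_\infty^p\big)^{p^\flat/p}\le N^{\,p^\flat/p-1}\sum_{j\in J_m}\|k_j\|_\infty^{p^\flat}$, and summing over $m$ using that each $j$ lies in at most $N$ blocks gives $\sum_m\|\chi_{I_m}k\|_{\mathfrak M_p}^{p^\flat}\le\|\psi\|_\infty^{p^\flat}N^{\,p^\flat/p}\sum_j\|k_j\|_\infty^{p^\flat}$. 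Taking $p^\flat$-th roots and using $(p^\flat/p)/p^\flat=1/p$ produces the constant $\|\psi\|_\infty N^{1/p}$, which depends only on $\psi$, giving $\|k\|_{\mathfrak M_p}\lesssim_\psi\big\|(\|k_j\|_\infty)_{j\in\mathbb Z}\big\|_{\ell_{p^\flat}}$ and completing the argument.
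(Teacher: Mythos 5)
Your proof is correct, and it rests on the same two ingredients as the paper's --- Theorem~\ref{thm:decomp} plus the elementary bound $\norm{a\otimes b}_{\mathfrak M_p}\le\norm{a}_\infty\norm{b}_\infty$ for separable symbols --- but it organises the decomposition differently, in a way worth comparing. You fix the unit-interval partition $I_m=[m,m+1)$ once and for all, accept that each block $\chi_{I_m}k$ is a sum of up to $N$ separable pieces, and are then forced to reconcile the $\ell_p$-type inner sums produced by the $p$-triangle inequality with the outer $\ell_{p^\flat}$-norm demanded by Theorem~\ref{thm:decomp}; this is the power-mean and bounded-overlap computation that you correctly flag as the delicate point, and it goes through precisely because $\abs{J_m}\le N$ uniformly in $m$ and each $j$ meets at most $N$ blocks. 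The paper instead splits the index set into $N$ residue classes, writing $k=\sum_{l=0}^{N-1}k^{(l)}$ with $k^{(l)}(t,s)=\sum_{j}\psi(t-l-Nj)k_j(s)$, so that within each class the translates of $\psi$ are disjointly supported: each block of $k^{(l)}$ is then a \emph{single} separable symbol, Theorem~\ref{thm:decomp} applies with no inner sum to convert, and the $N$ classes are recombined by one application of the $p$-triangle inequality. The paper's decomposition thus sidesteps the $\ell_p$-versus-$\ell_{p^\flat}$ mismatch entirely, while your version makes the finite-overlap mechanism explicit; both yield a constant of the order $\norm{\psi}_\infty N^{\nicefrac{1}{p}}$, and both inherit from Theorem~\ref{thm:decomp} the restriction $0<p\le 1$, which you state explicitly.
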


\begin{proof}
	For disjointly supported functions
	$\left( t\mapsto \psi(t-j) \right)_{j\in\mathbb Z}$, the result follows
	immediately from Theorem~\ref{thm:decomp}.
	If not, we may choose some sufficiently large $N>0$, such that
	$\left( t\mapsto \psi(t-Nj) \right)_{j\in\mathbb Z}$, is a disjointly
	supported sequence of functions.
	Then let $k=\sum_{l=0}^{N-1}k^{(l)}$, where
	\[
		k^{(l)}(t,s)=
		\sum\limits_{j\in\mathbb Z}
		\psi(t-l-Nj)k_j(s),
	\]
	for all $s,t\in\mathbb R$.
	The result then follows from the individual bounds on each $k^{(l)}$.
\end{proof}

Let us now return to wavelet decompositions.
For each $j,l\in\mathbb Z$, and any function $k$ on $\mathbb R^2$,
define
\[k_{j,l}(s)=\int_{\mathbb R}\varphi_{j,k}(t)k(t,s)dt,\]
for all $s\in\mathbb R$.
The next corollary then follows immediately by rescaling, as
$\varphi$ is a compactly supported wavelet, where
$\varphi_{j,k}(t)=2^{\nicefrac{j}{2}}\varphi(2^jt-k)$.

\begin{crl}
	\label{crl:schur:est}
	For any bounded function
	$k\in \dot{B}^{\nicefrac{1}{p^\flat}}_{p^\flat,p}(\mathbb R,L_\infty(\mathbb R))$,
	and any $j\in\mathbb Z$, let
	\[
		k_j(t,s)=\sum\limits_{l\in\mathbb Z}
		\varphi_{j,l}(t)k_{j,l}(s),
	\]
	for all $s,t\in\mathbb R$.
	
	We then have that
	\[
		\norm{k_j}_{\mathfrak M_p}
		\lesssim_\varphi
		2^{\nicefrac{j}{2}}
		\norm{
			\left( 
				\norm{k_{j,l}}_\infty
			\right)_{l\in\mathbb Z}
		}_{\ell_{p^\flat}},
	\]
	for all $j\in\mathbb Z$.
\end{crl}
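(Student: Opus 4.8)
The plan is to reduce the scale-$j$ estimate to the scale-$0$ estimate furnished by the previous corollary, by exploiting the invariance of the $\mathfrak M_p$-quasinorm under simultaneous dilation of the symbol. Concretely, the first step is to establish that for every $a>0$ and every bounded symbol $k$ one has $\norm{k(a\,\cdot,a\,\cdot)}_{\mathfrak M_p}=\norm{k}_{\mathfrak M_p}$.

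To prove this scaling identity I would conjugate by the $L_2(\mathbb R)$ dilation unitary $U_a f(t)=a^{\nicefrac{1}{2}}f(at)$. A change of variables in the integral kernel shows that $U_a\Op(\phi)U_a^*=\Op(\phi_a)$ with $\phi_a(t,s)=a\,\phi(at,as)$, and since $U_a$ is unitary this conjugation preserves every $\mathcal L_p$-quasinorm. The same change of variables, applied to the product symbol, gives $U_a\Op(k\phi)U_a^*=\Op\bigl(k(a\,\cdot,a\,\cdot)\,\phi_a\bigr)$, so that $\norm{\Op(k\phi)}_p=\norm{\Op(k(a\,\cdot,a\,\cdot)\phi_a)}_p$. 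Because $\phi\mapsto\phi_a$ is a bijection of the set $\{\norm{\Op(\phi)}_p\le 1\}$ onto itself, taking the supremum in \eqref{mp_norm_def} yields the claimed invariance.

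With the scaling identity in hand, I would apply it with $a=2^{-j}$ to $k_j$. Since $\varphi_{j,l}(t)=2^{\nicefrac{j}{2}}\varphi(2^jt-l)$, we obtain
\[
	k_j(2^{-j}u,2^{-j}v)
	=
	2^{\nicefrac{j}{2}}\sum_{l\in\mathbb Z}\varphi(u-l)\,k_{j,l}(2^{-j}v),
\]
which is precisely of the form covered by the previous corollary, with bump function $\psi=\varphi$ and $s$-profiles $v\mapsto 2^{\nicefrac{j}{2}}k_{j,l}(2^{-j}v)$, each of $L_\infty$-norm $2^{\nicefrac{j}{2}}\norm{k_{j,l}}_\infty$. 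Invoking that corollary and then undoing the dilation via the scaling identity produces the asserted bound
\[
	\norm{k_j}_{\mathfrak M_p}
	\lesssim_\varphi
	2^{\nicefrac{j}{2}}\norm{\left(\norm{k_{j,l}}_\infty\right)_{l\in\mathbb Z}}_{\ell_{p^\flat}}.
\]

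I expect the main subtlety to be book-keeping rather than a genuine obstacle: one must check that the implied constant is independent of $j$. This is exactly what rescaling secures, since after the substitution the bump function playing the role of $\psi$ is always the fixed wavelet $\varphi$, and never a $j$-dependent dilate of it. Applying the previous corollary directly to $k_j$, by contrast, would involve the bump $\varphi(2^j\,\cdot-l)$, whose support shrinks with $j$ and whose associated constant would then have to be controlled separately. The one computational point worth care is the kernel change of variables giving $U_a\Op(k\phi)U_a^*=\Op(k(a\,\cdot,a\,\cdot)\phi_a)$, which is routine.
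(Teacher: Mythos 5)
Your proposal is correct and is exactly the argument the paper intends: its entire proof is the remark that the corollary ``follows immediately by rescaling,'' and your dilation-invariance identity $\norm{k(a\,\cdot,a\,\cdot)}_{\mathfrak M_p}=\norm{k}_{\mathfrak M_p}$ (via conjugation by the unitary $U_a$) together with the substitution $a=2^{-j}$ is precisely the rescaling being invoked. You have simply supplied the details the authors omitted, including the correct observation that rescaling is what keeps the constant $j$-independent.
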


Before we proceed, let us note that for a function
$k\in \dot{B}^s_{p,q}(\mathbb R,L_\infty(\mathbb R))$, it need not be the case
that the decomposition $k=\sum_{j\in\mathbb Z}k_j$ holds
(compare to the scalar-valued failure of such a decomposition, as detailed
in \cite[Chapter~3, Proposition~4]{Meyer:1992}).
Despite this, it follows from an obvious modification of
\cite[Lemma~4.1.4]{MS:2020} that if $k\in L_\infty(\mathbb R^2)$, then there
exists a function $c\in L_\infty(\mathbb R)$, such that
\[
	k(t,s)=c(t)+\sum\limits_{j\in\mathbb Z}\left(k_j(t,s)-k_j(t,0)\right),
\]
for all $t,s\in \mathbb R^2$, and such that
\[
	\norm{c}_\infty\lesssim
	\norm{k}_\infty+
	|k|_{\dot{B}^s_{p,q}(\mathbb R, L_\infty(\mathbb R))}.
\]
Finally, we may state our strengthening of
\cite[Theorem~9.2]{Birman:Solomyak:1977}.
Note that the condition that the function $k$ is bounded cannot be removed,
as the space of $\mathcal L_2$-Schur multipliers is precisely the set of
bounded functions.

\begin{thm}
	\label{thm:schur}
	For any index $p\in(0,2)$, if
	$k\in\dot{B}^{\nicefrac{1}{p^\flat}}_{p^\flat,p}
	(\mathbb R,L_\infty(\mathbb R))\cap L_\infty(\mathbb R^2)$,
	then $k$ is an $\mathcal L_p$-Schur multiplier,
	with the quasinorm estimate
	\[
		\norm{k}_{\mathfrak M_p}\lesssim_p
			|k|_{\dot{B}^{\nicefrac{1}{p^\flat}}_{%
			p^\flat,p}(\mathbb R,L_\infty(\mathbb R))}
			+
			\norm{k}_{\infty}.
	\]
\end{thm}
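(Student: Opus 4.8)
The plan is to split into the quasi-Banach range $0<p\le 1$, where the rank-one reduction \eqref{rank_one_suffices} and the $p$-triangle inequality are at our disposal, and the Banach range $1<p<2$, which I would reach by interpolation against the endpoint $\mathfrak M_2=L_\infty(\mathbb R^2)$. For $0<p\le 1$ I would begin from the decomposition recorded just before the statement: there is $c\in L_\infty(\mathbb R)$ with
\[
    k(t,s)=c(t)+\sum_{j\in\mathbb Z}\bigl(k_j(t,s)-k_j(t,0)\bigr),\qquad \norm{c}_\infty\lesssim\norm{k}_\infty+\abs{k}_{\dot{B}^{\nicefrac{1}{p^\flat}}_{p^\flat,p}(\mathbb R,L_\infty(\mathbb R))}.
\]

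The first point is that a symbol depending on the first variable alone is a Schur multiplier of $\mathfrak M_p$-quasinorm at most its $L_\infty$-norm, since $\Op(m(t)\phi)=\Op(\phi)\circ M_m$ and $\norm{M_m}_\infty=\norm{m}_\infty$; this disposes of $c(t)$ and of each correction $k_j(t,0)=\sum_l\varphi_{j,l}(t)k_{j,l}(0)$. As $\varphi$ is compactly supported, $\norm{k_j(\cdot,0)}_\infty\lesssim_\varphi 2^{\nicefrac{j}{2}}\sup_l\abs{k_{j,l}(0)}\le 2^{\nicefrac{j}{2}}\norm{(\norm{k_{j,l}}_\infty)_l}_{\ell_{p^\flat}}$, which is exactly the bound Corollary~\ref{crl:schur:est} furnishes for $\norm{k_j}_{\mathfrak M_p}$. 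Applying the $p$-triangle inequality to the decomposition and then to each difference $k_j-k_j(\cdot,0)$ gives
\[
    \norm{k}_{\mathfrak M_p}^p\le\norm{c}_{\mathfrak M_p}^p+\sum_{j\in\mathbb Z}\norm{k_j-k_j(\cdot,0)}_{\mathfrak M_p}^p\lesssim_\varphi\norm{c}_\infty^p+\sum_{j\in\mathbb Z}2^{\nicefrac{jp}{2}}\norm{(\norm{k_{j,l}}_\infty)_l}_{\ell_{p^\flat}}^p.
\]
It remains only to recognise the sum: with smoothness $s=\nicefrac{1}{p^\flat}$, integrability index $p^\flat$ and fine index $p$, the weight exponent in Definition~\ref{defn:besov} is $s+\nicefrac{1}{2}-\nicefrac{1}{p^\flat}=\nicefrac{1}{2}$, so that sum equals $\abs{k}^p_{\dot{B}^{\nicefrac{1}{p^\flat}}_{p^\flat,p}(\mathbb R,L_\infty(\mathbb R))}$; together with the bound on $\norm{c}_\infty$ this is the asserted estimate for $0<p\le 1$.

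For $1<p<2$ the rank-one reduction fails and the summation above would only close in $\ell_1$, so I would interpolate instead. Choose $p_0\in(0,1]$ and $\theta\in(0,1)$ with $\nicefrac{1}{p}=(1-\theta)/p_0+\theta/2$ and set $X_0=\dot{B}^{\nicefrac{1}{p_0^\flat}}_{p_0^\flat,p_0}(\mathbb R,L_\infty(\mathbb R))\cap L_\infty(\mathbb R^2)$. The bilinear map $(k,\Op(\phi))\mapsto\Op(k\phi)$ is bounded $X_0\times\mathcal L_{p_0}\to\mathcal L_{p_0}$ by the case already proved, and bounded $L_\infty(\mathbb R^2)\times\mathcal L_2\to\mathcal L_2$ with norm $\norm{k}_\infty$ since $\mathfrak M_2=L_\infty$. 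As $[\mathcal L_{p_0},\mathcal L_2]_\theta=\mathcal L_p$, bilinear complex interpolation yields $\norm{k}_{\mathfrak M_p}\lesssim\norm{k}_{[X_0,L_\infty(\mathbb R^2)]_\theta}$, and I would conclude by establishing the embedding $\dot{B}^{\nicefrac{1}{p^\flat}}_{p^\flat,p}(\mathbb R,L_\infty(\mathbb R))\cap L_\infty(\mathbb R^2)\hookrightarrow[X_0,L_\infty(\mathbb R^2)]_\theta$. The parameter arithmetic is favourable: reading the $L_\infty$-endpoint as smoothness $0$ and integrability $\infty$, the interpolated smoothness and reciprocal integrability, each equal to $(1-\theta)\nicefrac{1}{p_0^\flat}=\nicefrac{1}{p^\flat}$, land on the target values.

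I expect this last identification to be the real obstacle. Complex interpolation of the vector-valued homogeneous Besov scale against $L_\infty$ is delicate, because $L_\infty$ is not itself a Besov space and the fine (third) index does not interpolate to the sharp value $p$ but only to some $r\ge p$; one therefore has to pass through the elementary embedding $\dot{B}^{\nicefrac{1}{p^\flat}}_{p^\flat,p}\hookrightarrow\dot{B}^{\nicefrac{1}{p^\flat}}_{p^\flat,r}$ rather than an exact interpolation identity, and to track the behaviour of the $L_\infty(\mathbb R^2)$ intersection through the interpolation functor. By contrast, once the decomposition and Corollary~\ref{crl:schur:est} are granted, the range $0<p\le 1$ is essentially bookkeeping.
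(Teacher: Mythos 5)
Your argument for $0<p\le 1$ is essentially the proof in the paper: the same decomposition $k=c+\sum_{j}(k_j-k_j(\cdot,0))$, the same appeal to Corollary~\ref{crl:schur:est}, the $p$-triangle inequality over the scales $j$, and the same bookkeeping identifying $\sum_j 2^{\nicefrac{jp}{2}}\norm{(\norm{k_{j,l}}_\infty)_l}_{\ell_{p^\flat}}^p$ with $\abs{k}^p_{\dot{B}^{\nicefrac{1}{p^\flat}}_{p^\flat,p}(\mathbb R,L_\infty(\mathbb R))}$. Your explicit treatment of the terms depending on $t$ alone via $\Op(m\phi)=\Op(\phi)\circ M_m$ is a correct (and welcome) justification of the bound $\norm{k_j(\cdot,0)}_{\mathfrak M_p}\le\norm{k_j(\cdot,0)}_\infty$ that the paper uses implicitly. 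Up to $p=1$ the proposal is correct and takes the same route.

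For $1<p<2$ there is a genuine gap. You are right to be uneasy: the rank-one reduction \eqref{rank_one_suffices} and the $p$-triangle inequality in $\mathfrak M_p$ are stated (and valid) only for $p\le 1$, and the naive triangle inequality in the Banach space $\mathfrak M_p$ would only close the sum over scales in $\ell_1$, i.e.\ under the stronger hypothesis that the fine index is $1$ rather than $p$. (The paper runs the same computation uniformly for all $p\in(0,2)$, so your caution here is not misplaced.) However, your proposed repair --- bilinear complex interpolation of $(k,\Op(\phi))\mapsto\Op(k\phi)$ between $X_0\times\mathcal L_{p_0}\to\mathcal L_{p_0}$ and $L_\infty(\mathbb R^2)\times\mathcal L_2\to\mathcal L_2$ --- is not carried out at its decisive step. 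Everything hinges on the embedding $\dot{B}^{\nicefrac{1}{p^\flat}}_{p^\flat,p}(\mathbb R,L_\infty(\mathbb R))\cap L_\infty(\mathbb R^2)\hookrightarrow[X_0,L_\infty(\mathbb R^2)]_\theta$, which you explicitly defer as ``the real obstacle'' without proof or reference. Since $L_\infty(\mathbb R^2)$ is not a Besov space, no off-the-shelf interpolation identity applies; the couple is moreover quasi-Banach (as $p_0\le 1$), so even the bilinear interpolation theorem and the identity $[\mathcal L_{p_0},\mathcal L_2]_\theta=\mathcal L_p$ require justification in that range; and the fine-index mismatch you mention is not resolved by the one-line parameter count you give. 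As written, the proposal establishes the theorem only for $0<p\le 1$; for $1<p<2$ it is a plan whose key step remains open.
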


\begin{proof}
	Given that $k$ is bounded,
	\[
		k(t,s)=c(t)+\sum\limits_{j\in\mathbb Z}
		\left( k_j(t,s)-k_j(t,0) \right),
	\]
	for all $s,t\in\mathbb R$, where $c$ is bounded, and satisfies
	\[
		\norm{c}_\infty\lesssim
		\norm{k}_\infty+
		|k|_{\dot{B}^s_{p,q}(\mathbb R, L_\infty(\mathbb R))}.
	\]
	By Corollary~\ref{crl:schur:est} and the $p$-triangle inequality,
	we have that
	\begin{align*}
		\norm{k}^p_{\mathfrak M_p}
		&\lesssim
		\norm{c}_\infty^p+
		\sum\limits_{j\in\mathbb Z}
		\left( 
			\norm{k_j}_{\mathfrak M_p}^p+
			\norm{k_j}^p_\infty
		\right)\\
		&\lesssim
		\norm{c}_\infty
		+
		\sum\limits_{j\in\mathbb Z}
		2^{\nicefrac{jp}{2}}
		\left( 
			\sum\limits_{l\in\mathbb Z}
			\norm{k_{j,l}}^{p^\flat}_\infty
		\right)^{\nicefrac{p}{p^\flat}}\\
		&\lesssim
		\norm{k}_\infty+
		|k|_{\dot{B}^{\nicefrac{1}{p^\flat}}_{p^\flat,p}
		(\mathbb R,L_\infty(\mathbb R))}
		+
		\sum\limits_{j\in\mathbb Z}
		2^{\nicefrac{jp}{2}}
		\left( 
			\sum\limits_{l\in\mathbb Z}
			\norm{k_{j,l}}^{p^\flat}_\infty
		\right)^{\nicefrac{p}{p^\flat}}\\
		&\lesssim
		\norm{k}_\infty+
		|k|_{\dot{B}^{\nicefrac{1}{p^\flat}}_{%
		p^\flat,p}(\mathbb R,L_\infty(\mathbb R))},
	\end{align*}
	where the last inequality follows by the definition of the
	$L_\infty$-valued Besov space.
\end{proof}

\begin{rmk}
    Let $D_x$ denote the unbounded self-adjoint operator on $L_2(\mathbb{R})$
	given by $D_x\xi(t) = -i\xi'(t).$
    It was observed by Birman and Solomyak that if $k$ is a Schur multiplier
	of $\mathcal{L}_1$, then the transformer $T^{M_x,D_x}_{k}$ acts boundedly in
	the operator norm, and
    \begin{equation*}
        T^{M_x,D_x}_k(1)
    \end{equation*}
    coincides with a pseudodifferential operator with symbol function $k$, 
	see \cite[Section 6]{Birman:Solomyak:2003}. 
	Theorem \ref{thm:schur} therefore gives a new proof of the result that a
	pseudodifferential operator with symbol function bounded and belonging to
	$\dot{B}^{\nicefrac{1}{2}}_{2,1}(\mathbb{R},L_\infty(\mathbb{R}))$ defines a
	bounded linear operator on $L_2(\mathbb{R})$
	(see \cite[Equation~(6.3)]{Birman:Solomyak:2003} for a weaker example of such
	a result).
    
    This result should also be compared to the work of Sugimoto for similar estimates on pseudo-differential
	operators \cite{Sugimoto:1988b}.
\end{rmk}
%

\end{document}